\def\arxiv{1}
\newcommand{\dist}{\mbox{\rm dist}}
\newcommand{\prox}{\mbox{\rm prox}}
\newcommand{\relint}{\mbox{\rm relint}}
\newcommand{\norm}[1]{{\left\|{#1}\right\|}}
\newcommand{\R}{\mathbb{R}}
\newcommand{\N}{\mathbb{N}}
\renewcommand{\H}{\mathcal{E}}
\renewcommand{\SS}{\modify{\mathcal{S_{++}}}}
\renewcommand{\S}{\modify{\mathcal{S_{+}}}}
\newcommand{\M}{\mathcal{M}}
\newcommand{\levbet}{\text{Lev}\left({\xi}\right)}
\newcommand{\levv}[1]{\text{Lev}({#1})}
\newcommand{\dom}{\text{dom}}
\newcommand{\inprod}[2]{\langle#1,\,#2\rangle}
\DeclareMathOperator*{\argmin}{\arg\,min}
\newcommand*{\lmss}{\fontfamily{lmss}\selectfont}
\def\ipvm{{\sf ISQA}\xspace}
\def\ipn{{\sf ISQA$^{+}$}\xspace}
\def\a9a{{\lmss a9a}\xspace}
\def\w8a{{\lmss w8a}\xspace}
\def\ijcnn1{{\lmss ijcnn1}\xspace}
\def\cov{{\lmss covtype.scale}\xspace}
\def\realsim{{\lmss realsim}\xspace}
\def\news{{\lmss news20}\xspace}
\def\eps{{\lmss epsilon}\xspace}
\def\webspam{{\lmss webspam}\xspace}
\def\rcvt{{\lmss rcv1-test}\xspace}
\newcommand{\modify}[1]{{#1}}
\newcommand{\modifyy}[1]{{#1}}
\newcommand{\modify}[1]{{{#1}}}
\newcommand{\modifyy}[1]{{\color{red}{#1}}}
\newcommand{\TheTitle}{Accelerating Inexact Successive Quadratic
Approximation for Regularized Optimization Through Manifold Identification}
\newcommand{\ShortTitle}{Manifold Identification of Inexact
Proximal-Newton-Type Methods}
\title{{\TheTitle}\thanks{Version of \today.
	This work was supported in part by NSTC of R.O.C. grants
109-2222-E-001-003-MY3 and 111-2628-E-001-003-, and Academia Sinica
grant AS-GCS-111-M05.}}
\titlerunning{{\ShortTitle}}
\author{Ching-pei Lee}
\institute{Ching-pei Lee \at
	\email{\url{leechingpei@gmail.com}}.
Institute of Statistical Mathematics, Tokyo, Japan}
\date{}
\journalname{Mathematical Programming}
\begin{document}

\maketitle

\begin{abstract}
For regularized optimization that minimizes the sum of a smooth
term and a regularizer that promotes structured solutions,
inexact proximal-Newton-type methods, or successive quadratic approximation
(SQA) methods, are widely used for their superlinear convergence in
terms of iterations.
However, unlike the counter parts in smooth optimization, they suffer
from lengthy running time in solving regularized subproblems because
even approximate solutions cannot be computed easily,
so their empirical time cost is not as impressive.
In this work, we first show that for partly smooth regularizers,
although general inexact solutions cannot identify the active manifold
that makes the objective function smooth,
approximate solutions generated by commonly-used subproblem solvers
will identify this manifold, even with arbitrarily low solution
precision.
We then utilize this property to propose an improved SQA method, \ipn,
that switches to efficient smooth optimization methods after this
manifold is identified.
We show that for a wide class of degenerate solutions,
\ipn possesses superlinear convergence not only in iterations,
but also in running time because the cost per iteration is bounded.
In particular, our superlinear convergence result holds on problems
satisfying a sharpness condition that is more general than that in existing
literature.
\modifyy{We also prove iterate convergence under a sharpness condition for
inexact SQA, which is novel for this family of methods that could
easily violate the classical relative-error condition frequently used
in proving convergence under similar conditions.}
\ifdefined\arxiv
Experiments on real-world problems confirm that \ipn greatly
improves the state of the art for regularized optimization.
\else
\modify{Experiments on real-world problems support that \ipn
improves running time over some modern solvers for regularized
optimization.}
\fi

\keywords{Variable metric \and Manifold identification \and
Regularized optimization \and Inexact method \and Superlinear
convergence}
\end{abstract}
\subclass{49M15 \and 90C55 \and 49K40 \and 90C31 \and 90C55 \and 65K05}

\section{Introduction} \label{sec:intro}
Consider the following regularized optimization problem:
\begin{equation}
	\min_{x \in \H}\quad F(x) \coloneqq f(x) + \Psi(x),
	\label{eq:f}
\end{equation}
where $\H$ is a Euclidean space with an inner product
$\inprod{\cdot}{\cdot}$ and its induced norm $\norm{\cdot}$,
the regularizer $\Psi$ is extended-valued, convex, proper, and
lower-semicontinuous, $f$ is \modify{continuously differentiable with
Lipschitz-continuous gradients},
and the solution set $\Omega$ is non-empty.
This type of problems is ubiquitous in applications such as machine
learning and signal processing \modify{(see, for example,
\cite{Tib96a,CanR09a,CanRT06a})}.
One widely-used method for \cref{eq:f} is inexact successive quadratic
approximation (\ipvm).
At the $t$th iteration with iterate $x^t$,
\ipvm obtains the update direction $p^t$ by approximately solving
\begin{gather}
\label{eq:quadratic}
p^t \approx \argmin_{p \in \H}\quad Q_{H_t}\left( p; x^t
	\right),\\
\label{eq:Qdef}
Q_{H_t}\left( p; x^t \right) \coloneqq \inprod{\nabla f\left( x^t
	\right)}{p} + \frac12 \inprod{p}{ H_t p} + \Psi\left( x^t + p
	\right) - \Psi\left( x^t \right),
\end{gather}
where $H_t$ is a self-adjoint positive-semidefinite linear
endomorphism of $\H$.
The iterate is then updated along $p^t$ with a step size $\alpha_t
> 0$.

\ipvm is among the most efficient for \cref{eq:f}.
Its variants differ in the choice of $H_t$ and $\alpha_t$, and how
accurately \cref{eq:quadratic} is solved.
In this class, proximal Newton (PN) \citep{LeeSS14a,LiAV17a} and proximal
quasi-Newton (PQN) \citep{SchT16a} are popular for their fast convergence in
iterations.
Regrettably, their subproblem has no closed-form solution as $H_t$ is
non-diagonal,
so one needs to use an iterative solver for \cref{eq:quadratic} and
the running time to reach the accuracy requirement can hence be
lengthy.
For example, to attain the same superlinear convergence as truncated
Newton for smooth optimization, PN that takes $H_t = \nabla^2 f$
requires increasing accuracies for the subproblem solution, implying a
growing and unbounded number of inner iterations of the subproblem
solver.
Its superlinear convergence thus gives little practical advantage in
running time.
On the contrary, in smooth optimization, one can solve
\cref{eq:quadratic} with a bounded cost by either conjugate gradient
(CG) or matrix factorizations since $\Psi \equiv 0$.
The advantage of second-order methods over the first-order ones in
regularized optimization is therefore not as significant as that in
smooth optimization.

A possible remedy when $\Psi$ is partly smooth \citep{Lew02a}
is to switch to smooth optimization after identifying an active
manifold $\M$ that contains a solution \modify{$\hat x$} to \cref{eq:f}
and makes $\Psi$ confined to it smooth.
We say an algorithm can identify $\M$ if there is a neighborhood $U$
of \modify{$\hat x$} such that $x^t \in U$ implies $x^{t+1} \in \M$, and call it
possesses the manifold identification property.
Unfortunately, for \ipvm, this property in general only holds when
\cref{eq:quadratic} is always solved exactly.
Indeed, even if each $p^t$ is arbitrarily close to the corresponding
exact solution, it is possible that no iterate lies in the active
manifold, as shown below.
\begin{example}
\label{ex:conv}
Consider the following simple example of \cref{eq:f} with
$\Psi(\cdot) = \|\cdot\|_1$:
\begin{equation*}
	\min_{x \in \R^2}\, (x_1 - 2.5)^2 + (x_2 - 0.3)^2 + \|x\|_1,
\end{equation*}
whose only solution is $\modify{\hat x} =
(2,0)$, and $\|x\|_1$ is smooth relative to $\M = \{x \mid x_2 =
0\}$ around $\modify{\hat x}$.
Consider $\{x^t\}$ with $x^t_1 = 2 +
f(t), x^t_2 = f(t)$, for some $f(t) > 0$ with $ f(t) \downarrow 0$,
and let $H_t \equiv I, \alpha_t \equiv 1$, and $p^t = x^{t+1} - x^t$.
The optimum of \cref{eq:quadratic} is $p^{t*} = \modify{\hat x} - x^t$, so $\|x^t-
\modify{\hat x}\| = O(f(t))$  and $\|p^t - p^{t*}\|=O(f(t))$.
As $f$ is arbitrary, both the subproblem approximate
solutions and their corresponding objectives converge to the
optimum arbitrarily fast, but $x^t \notin \M$ for all $t$.
\qed
\end{example}
Moreover, some versions of inexact PN generalize the
stopping condition of CG for truncated Newton to require $\|r^t\|
\rightarrow 0$, where
\begin{equation}
r^t \coloneqq \argmin_{r} \|r\|, \quad \text{ subject to } \quad r \in
\partial_{p} Q_{H_t}(p^t;x^t),
\label{eq:subnorm}
\end{equation}
but \cref{ex:conv} gives a sequence of $\{\|r^t\|\}$ converging to
$1$,  hinting that such a condition might have an implicit
relation with manifold identification.

Interestingly, in our numerical experience in
\cite{LeeW18a,LeeLW19a,YSL20a}, \ipvm with approximate subproblem
solutions, even without an increasing solution precision and on
problems that are not strongly convex, often identifies the active
manifold rapidly.
We thus aim to provide theoretical support for such a phenomenon
and utilize it to devise more efficient and practical methods that
trace the superior performance of second-order methods in smooth
optimization.

In this work, we show that \ipvm essentially possesses the manifold
identification property, by giving a sufficient condition for inexact
solutions of \cref{eq:quadratic} in \ipvm to identify the active
manifold that is satisfied by the output of most of the widely-used
subproblem solvers even if \cref{eq:quadratic} is solved arbitrarily
roughly.
We also show that $\norm{r^t} \downarrow 0$ is indeed sufficient for
manifold identification, so PN can achieve superlinear convergence in
a more efficient way through this property.
When the iterates do not lie in a compact set, it is possible that the
iterates do not converge, in which case even algorithms possessing the
manifold identification property might fail to identify the active
manifold because the iterates never enter a neighborhood that enables
the identification.
\modifyy{Therefore, we also show convergence of the iterates under a sharpness
condition \modify{widely-seen} in real-world problems that generalizes
the quadratic growth condition and the weak sharp minima.
Under convexity, this sharpness condition is equivalent to a type of
the Kurdyka-{\L}ojasiewicz (KL) condition \citep{Kur98a,Loj63a}, but
convergence of general \ipvm methods under the KL condition is
unknown since the inexactness condition can easily violate the
relative-error condition needed in \cite{AttBRS10a,BonLPPR17a}, and
thus our analysis provides a novel approach to obtain iterate
convergence for this family of algorithms.}
Based on these results, we propose an improved, practical
algorithm \ipn that switches to smooth optimization after the active
manifold is presumably identified.
We show that \ipn is superior to existing PN-type methods as it
possesses the same superlinear and even quadratic rates in
iterations but has bounded per-iteration cost.
\ipn hence also converges superlinearly in running time,
which, to our best knowledge, is the first of the kind.
Our analysis is more general than existing ones in guaranteeing
superlinear convergence in a broader class of degenerate problems.
Numerical results also confirm \ipn's much improved efficiency
over PN and PQN.

\subsection{Related Work}
\label{sec:related}
\ipvm for \cref{eq:f} or the special case of constrained optimization
has been well-studied, and we refer the readers to
\cite{LeeW18a} for a detailed review of related methods.
We mention here in particular the works
\cite{LeeSS14a,ByrNO16a,LiAV17a,MCY19a} that provided superlinear convergence
results.
\citet{LeeSS14a} first analyzed the superlinear convergence of
PN and PQN.
Their analysis considers only strongly convex $f$,
so both the convergence of the iterate and the positive-definiteness
of the Hessian are guaranteed.
Their inexact version requires $\|r^t\|\downarrow 0$, which
might not happen when the solutions to \cref{eq:quadratic} are only
approximate, as illustrated in \cref{ex:conv}.
With the same requirement for $\|r^t\|$ as \cite{LeeSS14a},
\citet{LiAV17a} showed that superlinear convergence for inexact PN can
be achieved when $f$ is self-concordant.
\citet{ByrNO16a} focused on $\Psi(\cdot) = \|\cdot\|_1$ and showed
superlinear convergence of PN under the subproblem
stopping condition \cref{eq:gm} defined in \cref{sec:background},
which is achievable as long as $p^t$ is close enough to the optimum of
\cref{eq:quadratic}.
To cope with degenerate cases in which the Hessian is only positive
semidefinite,
\citet{MCY19a} used the stopping condition of \cite{ByrNO16a} to
propose a damping PN for general $\Psi$ and showed that its
iterates converge and achieve superlinear convergence under convexity
and the error-bound (EB) condition \citep{LuoT92a} even if $F$ is not
coercive.
A common drawback of \cite{LeeSS14a,ByrNO16a,LiAV17a,MCY19a} is that they
all require increasing precisions in \modifyy{solving the subproblem}, so the superlinear
rate is observed only in iterations but not in running time in their
experiments.
In contrast, by switching to smooth optimization after identifying the
active manifold, \ipn achieves superlinear convergence not only in
iterations but also in time, and is thus much more efficient in
practice.
Our superlinear convergence result also allows a broader range of
degeneracy than that in \cite{MCY19a}.

Although \ipvm is intensively studied, its ability for manifold
identification is barely discussed because this does not in general
hold, as noted in \cref{ex:conv}.
\citet{Har11a} showed that \ipvm identifies the active manifold under
the \ifdefined\arxiv
impractical
\fi
assumptions that \cref{eq:quadratic} is always solved exactly and
the iterates converge,
and his analysis cannot be extended to inexact versions.
Our observation in \cite{LeeW18a,LeeLW19a,YSL20a} that \ipvm
identifies the active manifold empirically motivated this work to
provide theoretical guarantees for this phenomenon.

Manifold identification requires the iterates, or at least a
subsequence, to converge to a point of partial smoothness.
In most existing analyses for \cref{eq:f}, iterate convergence is
proven under either:
(i) $f$ is convex and the algorithm is a first-order one, (ii) $F$ is
strongly convex,  or (iii) the Kurdyka-{\L}ojasiewicz (KL) condition
holds.
Analyses for the first scenario rely on the implicit regularization of
first-order methods such that their iterates lie in a bounded region
\citep{LiaFP17a}, but this is not applicable to \ipvm.
Under the second condition, convergence of the objective directly
implies that of the iterates.
\modifyy{For the third case, convergence of the full iterate sequence is
usually proven under an assumption of a relative-error behavior, of
the form
\[
	\min_{v \in \partial F(x^{t+1})} \norm{v} \leq b \norm{x^{t+1}
- x^t}, \forall t
\]
for some $b > 0$,
as done in \cite{ChoPR14a,BonPR20a,AttBRS10a}, but this condition can
easily be violated when inexactness kicks in in \ipvm, as argued by
\citet{BonPR20a}.
To work around this issue, \cite{BonPR20a} further assumed that
the forward-backward envelope \citep{SteTP17a} of $F$ satisfies the KL
condition and obtained iterate convergence under such a situation, but
whether KL condition of $F$ implies that of its forward-backward
envelope is unclear.
The only exception to get convergence under the KL condition of $F$
for a specific type of SQA method is} \cite{MCY19a} that shows the
convergence of the iterates for their specific algorithm under EB and
convexity of $f$ but \cite{MCY19a} requires $H_t$ in
\cref{eq:quadratic} to be the Hessian of $f$ plus a multiple of
identity and their analysis cannot be extended to general $H_t$.
On the other hand, our analysis for iterate convergence is novel and
more general in covering a much broader algorithmic framework and
requiring only a general sharpness condition for $F$ that contains
both EB and the weak-sharp minima \citep{BurF93a} as special cases.

Our design of the two-stage \ipn is inspired by
\cite{LeeW12a,YSL20a} in conjecturing that the active manifold is
identified after the current manifold remains unchanged,
but the design of the first stage is quite different and we
also add in additional safeguards in the second stage.
\citet{LeeW12a} used dual averaging in the first
stage for optimizing the expected value of an objective function
involving random variables, so their algorithm is more suitable for
stochastic settings.
\citet{YSL20a} focused on distributed optimization and their
usage of manifold identification is for reducing the communication
cost, instead of accelerating general regularized optimization
considered in this work.

\subsection{Outline}
This work is outlined as follows.
In \cref{sec:background}, we describe the algorithmic framework
and give preliminary properties.
Technical results in \cref{sec:manifold} prove the manifold
identification property of \ipvm and the convergence of the iterates.
We then describe the proposed \ipn and show its superlinear
convergence in running time in \cref{sec:alg}.
The effectiveness of \ipn is then illustrated through extensive
numerical experiments in \cref{sec:exp}.
\cref{sec:con} finally concludes this work.
Our implementation of the described algorithms is available at
\url{https://www.github.com/leepei/ISQA_plus/}.

\section{Preliminaries}
\label{sec:background}
We denote the minimum of \cref{eq:f} by $F^*$; the domain of $\Psi$
by $\dom(\Psi)$; and the set of convex, proper, and lower
semicontinuous functions by $\Gamma_0$.
\modify{For any set $C$, $\relint(C)$ denotes its relative interior.}
We will frequently use the following notations.
\[
	\delta_t \coloneqq F(x^t) - F^*,\quad
P_{\Omega}(x) \coloneqq \argmin_{y \in \Omega} \|x - y\|,\quad
	\dist(x,\Omega) \coloneqq \|x - P_\Omega(x)\|.
\]
The level set $\levbet \coloneqq \left\{ x\mid  F(x) - F^* \leq \xi
\right\}$ for any $\xi \geq 0$  is closed but not necessarily
\modify{bounded.}
A function is $L$-smooth if it is \modify{differentiable with the gradient
$L$-Lipschitz continuous}.
We denote the identity operator by $I$.
For self-adjoint linear endomorphisms $A,B$ of $\H$,
$A \succ B$ ($\succeq$) means $A-B$ is positive definite (positive
semidefinite).
We abbreviate $A \succ \tau I$ to $A \succ \tau$ for $\tau \in \R$.
The set of $A$ with $A \succ 0$ is denoted by $\SS$.
The subdifferential $\partial \Psi(x)$ of $\Psi$ at $x$ is
well-defined as $\Psi \in \Gamma_0$,
hence so is the generalized gradient $\partial F(x) = \nabla f(x) +
\partial \Psi(x)$.
For any $g \in \Gamma_0$, $\tau\geq 0$, and \modify{$\Lambda \in \SS$},
the proximal mapping
\begin{equation}
\label{eq:proximal}
	\prox^{\Lambda}_{\tau g}(x) \coloneqq \argmin_{y \in \H}\, \frac12
	\inprod{x - y}{\Lambda (x-y)} + \tau g\left( y \right)
\end{equation}
is continuous and finite in $\H$ even outside
$\dom(g)$.
When $\Lambda = I$, \cref{eq:proximal} is shorten to
$\prox_{\tau g}(x)$.
For \cref{eq:quadratic}, we denote its optimal solution by
$p^{t*}$. When there is no ambiguity, we abbreviate $Q_{H_t}(\cdot;
x^t)$ to $Q_t(\cdot)$, $Q_t(p^t)$ to $\hat Q_t$, and $Q_t(p^{t*})$ to
$Q_t^*$.
\ifdefined\arxiv
\else
\modify{Some notations used in this paper are summarized in
	\cref{tbl:notation}.}

\begin{table}[bht]
	\modify{
		\caption{\modify{Summary of Some Notations.}}
	\label{tbl:notation}
	\centering
	\begin{tabular}{@{}ll@{}}
		\toprule
		Notation & Meaning \\
		\midrule
		$F$ & Objective function in \cref{eq:f}\\
		$F^*$ & Optimal objective value\\
		$f$ & The smooth term in \cref{eq:f}\\
		$\Psi$ & The partly smooth term in \cref{eq:f}\\
		$\Omega$ & Solution set of \cref{eq:f}\\
		$P_\Omega$ & Projection onto $\Omega$\\
		$\SS (\S)$ & The set of positive (semi)definite self-adjoint
		linear endomorphisms of $\H$\\
		$\Gamma_0$ & The set of convex, proper, and lower-semicontinuous functions\\
		$\prox^\Lambda_{\tau g}$ & \modifyy{The proximal mapping
		defined in} \cref{eq:proximal}\\
		$x^t$ & Iterate at the $t$th iteration \\

		$Q_{H_t}(p;x)$ & The subproblem of \ipvm defined in \cref{eq:Qdef}\\
		\modifyy{$H_t$} & \modifyy{The quadratic term in defining the subproblem of \ipvm
			in \cref{eq:Qdef}}\\
		$Q_t(\cdot)$ & $Q_{H_t}(\cdot;x^t)$\\
		$p^{t*}$ & $\argmin_p\, Q_t(p)$\\
		$Q_t^*$ & $Q_t(p^{t*})$\\
		$p^t$ & Update direction at the $t$th iteration generated by approximately solving
		\cref{eq:quadratic}\\
		$\hat Q_t$ & $Q_t(p^t)$\\
		$\alpha_t$ & Step size at the $t$th iteration\\
		$r^t$ & \modifyy{The minimum-norm subgradient of
			$Q_{H_t}(\cdot; x^t)$;
		see} \cref{eq:subnorm}\\
		$\M$ & Active manifold; see \cref{def:ps}\\
		$\relint(C)$ & Relative interior of $C$\\
		$\phi_t$ & A local description of $\M$ around $x^t$\\
		$y^t$ & A point satisfying $\phi_t(y^t) = x^t$\\
		$g^t$ & $\nabla F(\phi_t(y^t))$\\
		$\mu_t$ & \modifyy{The damping term in the smooth step; see} \cref{eq:newton}\\
		$F_\phi(\cdot)$ & $F(\phi(\cdot))$ \\
		$y^*$ & A point satisfying $F_{\phi}(y^*) = x^*$\\
		$d_t$ & $\norm{y^t - y^*}$\\
		\bottomrule
\end{tabular}
}
\end{table}
\fi

\subsection{Algorithmic Framework}
\label{subsec:alg}
We give out details in defining the \ipvm framework
by discussing the choice of $H_t$,
the subproblem solver and its stopping condition, and how sufficient
objective decrease is ensured.
We consider the algorithm a two-level loop procedure, where the outer
loop updates the iterate $x^t$ and the iterations of the subproblem
solver form the inner loop.

After obtaining $p^t$ from \cref{eq:quadratic}, we need to find a
step size $\alpha_t > 0$ for it  to ensure sufficient objective
decrease.
Given $\gamma,\beta \in (0,1)$, we take $\alpha_t$ as the
largest value in $\{\beta^0,\beta^1,\dotsc\}$ satisfying an
Armijo-like condition.
\begin{equation}
	F\left( x^t + \alpha_t p^t \right) \leq F\left( x^t \right) +
	\gamma \alpha_t Q_t(p^t).
	\label{eq:armijo}
\end{equation}
This condition is satisfied by all $\alpha_t$ small enough as long as
$Q_t(p^t) < 0$ and $Q_t(\cdot)$ is strongly convex; see
\cite[Lemma~3]{LeeW18a}.

For the choice of $H_t$,
we only make the following blanket assumption without further
specification to make our analysis more general.
\begin{equation}
\exists M,m > 0,\quad \text{ such that } \quad M \succeq H_t  \succeq
m, \;\forall t \geq 0.
\label{assum:Hbound}
\end{equation}

For \cref{eq:quadratic}, any suitable solver for regularized
optimization, such as (accelerated) proximal gradient,
(variance-reduced) stochastic gradient methods, and their variants,
can be used, and the following are common for their inner loop
termination:
\begin{gather}
	\hat Q_t - Q_t^* \leq \epsilon_t,
	\label{eq:subprob}\\
\left\| r^t \right\| \leq \epsilon_t,
\quad \text{ or }
	\label{eq:partial}\\
G^\tau_{H_t} \left( p^t;x^t \right)\coloneqq \norm{p^t - \bar
p^t_\tau}\leq \epsilon_t,
	\label{eq:gm}
\end{gather}
for some given $\epsilon_t\geq 0$ and $\tau > 0$, where
\begin{align}
\label{eq:p}
\bar p^t_\tau &\coloneqq \prox_{\tau \Psi}\left(\left(x^t + p^t \right) -
	\tau \left(\nabla f(x^t) + H_t p^t\right)\right) - x^t.
\end{align}
\modify{
The point $\bar p^t_\tau$ in \cref{eq:p} is computed by taking a
proximal gradient step of the subproblem \cref{eq:quadratic} from
$p^t$, and thus $\bar p^t - p^t$ is \modifyy{the proximal gradient}
(with step size $\tau$) of $Q_t$ at $p^t$.
Because the subproblem is strongly convex, the norm of the proximal
gradient is zero at $p^t$ if and only if $p^t$ is the unique solution
to the subproblem.
We will see in the next subsection that the norm of this proximal
gradient squared is also equivalent to the objective distance to the
optimum of the subproblem.}
We summarize this framework in \cref{alg:vm}.

\begin{algorithm}[h]
\DontPrintSemicolon
\SetKwInOut{Input}{input}\SetKwInOut{Output}{output}
\caption{A Framework of \ipvm for \cref{eq:f}}
\label{alg:vm}
\Input{$x^0 \in \H$, $\gamma, \beta \in (0,1)$}

\For{$t=0,1,\dotsc$}{
	$\alpha_t \leftarrow 1$,
	pick $\epsilon_t \geq 0$ and $H_t$, and solve \cref{eq:quadratic}
	for $p^t$ satisfying \cref{eq:subprob},
	\cref{eq:partial}, or \cref{eq:gm}

	\lWhile{\cref{eq:armijo} is not satisfied}
	{$\alpha_t \leftarrow \beta \alpha_t$}
	$x^{t+1} \leftarrow x^t + \alpha_t p^t$
}
\end{algorithm}

\subsection{Basic Properties}
Under \cref{assum:Hbound}, \cref{eq:Qdef} is $m$-strongly convex with
respect to $p$, so the following standard results hold for any $\tau
\in (0, 1/M]$ and any $p^t \in \H$ \citep{Nes18a,DruL16a}.
\begin{gather}
\|r^t\|^2 \geq 2 m \left( Q_t(p^t) - Q_t^* \right)
\geq \frac{m}{M} G^\tau_{H_t}\left(
	p^t;x^t \right)^2,
\label{eq:PG}\\
G^\tau_{H_t}\left( p^t;x^t \right)^2 \geq \tau \left( \frac{(2m^{-1} +
	\tau)(1 + M \tau)}{\tau} - \frac{1}{2} \right)^{-1}
	\left(Q_t\left( \bar p^t_\tau\right) - Q_t^* \right).
\label{eq:EB}
\end{gather}
Therefore, \cref{eq:gm} and \cref{eq:subprob} are almost equivalent
and implied by \cref{eq:partial}, while \cref{ex:conv} has shown that
\cref{eq:partial} is a stronger condition not implied by
\cref{eq:subprob}.
Although \cref{eq:EB} does not show that \cref{eq:gm} directly
implies \cref{eq:subprob}, once \cref{eq:gm} is satisfied, we can use
it to find $\bar p^t_\tau$ satisfying \cref{eq:subprob} from $p^t$.

A central focus of this work is manifold identification, so we first
formally define manifolds following \cite{Vai84a}.
A set $\M \in \R^m$ is a $p$-dimensional $\mathcal{C}^k$ manifold
around $x \in \R^m$ if there is a $\mathcal{C}^k$ function $\Phi: \R^m
\rightarrow \R^{m-p}$ whose derivative at $x$ is surjective such that for
$y$ close enough to $x$, $y \in \M$ if and only if $\Phi(y) = 0$.
Through the implicit function theorem, we can also use a
$\mathcal{C}^k$ parameterization $\phi: \R^p \rightarrow \M$, with
$\phi(y) = x$ and the derivative injective at $y$, to describe a
neighborhood of $x$ on $\M$.
In our definition and proof below, we will require the function being
(subdifferentially) regular and prox-regular at a point $x$.
These two conditions are satisfied by convex functions everywhere and
thus we do not provide their definitions here, and interested readers
can refer to \cite[][Definition~7.25, Definition~13.27]{RocW09a} for details.
Now we are ready for the definition of partial smoothness
\citep{Lew02a} that we assume for the regularizer when discussing
manifold identification.
\begin{definition}[Partly smooth]
	\label{def:ps}
	A function $\Psi$ is partly
	smooth at a point $x^*$ relative to a set $\M$ containing $x^*$ if
	$\partial \Psi(x^*) \neq \emptyset$ and:
	\begin{enumerate}
			\item Around $x^*$, $\M$ is a $\mathcal{C}^2$-manifold and
				$\Psi|_{\M}$ is $\mathcal{C}^2$.
		\item The affine span of $\partial \Psi(x)$ is a translate
			of the normal space to $\M$ at $x^*$.
		\item $\partial \Psi$ is continuous at $x^*$ relative to $\M$.
		\item \modify{$\Psi$ is regular at all points $x \in \M$ with
			$\partial \Psi(x) \neq  \emptyset$
		around $x^*$.}
	\end{enumerate}
\end{definition}
Intuitively, it means $\Psi$ is smooth around
$x^*$ in $\M$ but changes drastically along directions leaving the
manifold.
We also call this $\M$ the active manifold.

\modify{As the original identification results in \cite[Theorem~5.3]{HarL04a}
and \cite[Theorem~4.10]{LewZ13a} require the sum $F$ to be partly smooth
but our setting does not require so for $f$,
we first provide a result relaxing the conditions to ensure
identification in our scenario.}
\modify{
\begin{lemma}
Consider \cref{eq:f} with $f\in \mathcal{C}^1$ and $\Psi$
partly
smooth at a point $x^*$ relative to a $\mathcal{C}^2$-manifold
$\M$.
If $\Psi$ is prox-regular at $x^*$,
the nondegenerate condition
\begin{equation}
0 \in \relint \left(\partial F(x^*)\right) = \nabla f(x^*) + \relint
\left(\partial \Psi(x^*)\right)
\label{eq:nod}
\end{equation}
holds, and there is a sequence $\{x^t\}$ converging to $x^*$ with
$F(x^t) \rightarrow F(x^*)$,
then
\[
	\dist\left( 0, \partial F(x^t) \right) \rightarrow 0
	\quad \Leftrightarrow \quad x^t \in \M \text{ for all t large}.
\]
\label{lemma:identify0}
\end{lemma}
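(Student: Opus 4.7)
My plan is to derive the conclusion from the classical identification theorem of Hare–Lewis for the partly smooth function $\Psi$ alone, using the nondegeneracy of $F$ only to supply the correct target subgradient. Because only $f \in \mathcal{C}^1$ is available, I avoid trying to argue that $F$ itself is partly smooth (which would demand $F|_\M \in \mathcal{C}^2$); instead I will apply Hare–Lewis at the nonzero subgradient $\bar v := -\nabla f(x^*) \in \relint(\partial \Psi(x^*))$, which is exactly what the nondegeneracy assumption \cref{eq:nod} provides after subtracting $\nabla f(x^*)$ from both sides.

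For the forward direction, I would first observe that $F(x^t) \to F(x^*)$ and the continuity of $f$ give $\Psi(x^t) \to \Psi(x^*)$. Next, from $\dist(0,\partial F(x^t)) \to 0$ and $\partial F(x^t) = \nabla f(x^t) + \partial \Psi(x^t)$, I would extract a sequence $v^t \in \partial \Psi(x^t)$ with $\nabla f(x^t) + v^t \to 0$ (achievable up to an $o(1)$ slack, which is absorbed since $\partial \Psi(x^t)$ is closed). Continuity of $\nabla f$ then forces $v^t \to -\nabla f(x^*) = \bar v$. With $\Psi$ partly smooth and prox-regular at $x^*$ relative to $\M$ and $\bar v \in \relint(\partial \Psi(x^*))$, the Hare–Lewis identification theorem (\cite[Theorem~5.3]{HarL04a}) applied at the subgradient $\bar v$ concludes that $x^t \in \M$ for all $t$ large.

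For the reverse direction, I would invoke item~3 of \cref{def:ps}: $\partial \Psi$ is continuous at $x^*$ relative to $\M$. Since $-\nabla f(x^*) \in \partial \Psi(x^*)$ and $x^t \to x^*$ with $x^t \in \M$ for large $t$, I can select $v^t \in \partial \Psi(x^t)$ with $v^t \to -\nabla f(x^*)$; combining this with continuity of $\nabla f$ gives $\nabla f(x^t) + v^t \to 0$, hence $\dist(0,\partial F(x^t)) \to 0$.

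The main obstacle is the bookkeeping in the forward direction: the hypothesis only controls the distance $\dist(0,\partial F(x^t))$ rather than providing an explicit sequence of subgradients, and one has to verify that the nondegeneracy of $F$ in the form \cref{eq:nod} is exactly what is needed to place $-\nabla f(x^*)$ in $\relint(\partial \Psi(x^*))$ so that Hare–Lewis applies. The rest is essentially a translation between the two formulations.
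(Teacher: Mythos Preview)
Your proposal is correct and follows essentially the same route as the paper: reduce to the identification theorem for $\Psi$ alone at the target subgradient $-\nabla f(x^*)\in\relint(\partial\Psi(x^*))$, after first deducing $\Psi(x^t)\to\Psi(x^*)$ from $F(x^t)\to F(x^*)$ and the continuity of $f$, and translating $\dist(0,\partial F(x^t))\to 0$ into $\dist(-\nabla f(x^*),\partial\Psi(x^t))\to 0$ via the continuity of $\nabla f$. The only cosmetic differences are that the paper invokes \cite[Theorem~4.10]{LewZ13a} (which gives both implications at once) rather than \cite[Theorem~5.3]{HarL04a}, and that you prove the reverse implication by hand from item~3 of \cref{def:ps}; both choices are equivalent here.
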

}
\modify{
\begin{proof}
We first observe that as $f \in \mathcal{C}^1$, $x^t \rightarrow x^*$
implies $f(x^t) \rightarrow f(x^*)$,
whose combination with $F(x^t) \rightarrow F(x^*)$ further implies
$\Psi(x^t) \rightarrow \Psi(x^*)$.
Therefore, the premises of \cite[Theorem~4.10]{LewZ13a} on $\Psi$ are
satisfied.
We then note that
\begin{equation}
	\label{eq:convdist}
	\dist\left(0, \partial F(x^t)\right) \rightarrow 0 \quad
	\Leftrightarrow \quad
	\dist\left(-\nabla f(x^t), \partial \Psi(x^t)\right) \rightarrow 0.
\end{equation}
Again from that $f \in \mathcal{C}^1$, $x^k \rightarrow x^*$ implies
$\nabla f(x^k) \rightarrow \nabla f(x^*)$,
so by \cref{eq:nod}, \cref{eq:convdist} is
further equivalent to
\[
	\dist\left(-\nabla f(x^*), \partial \Psi(x^t)\right) \rightarrow
	0,
\]
which is the necessary and sufficient condition for $x^t \in \M$ for
all $t$ large in \cite[Theorem~4.10]{LewZ13a} because \cref{eq:nod}
indicates that $- \nabla f(x^*) \in \relint(\partial \Psi(x^*))$.
We then apply that theorem to obtain the desired result.
Here we note that for the requirements
\qed
\end{proof}
}

\modify{
Using \cref{lemma:identify0}, we further state an identification
result for \cref{eq:f} under our setting without the need to check
whether $\{F(x^t)\}$ converges to $F(x^*)$.
This will be useful in our later theoretical development.}
\begin{lemma}
	Consider \cref{eq:f} with \modify{$f \in \mathcal{C}^1$} and $\Psi \in
\Gamma_0$.
If $\Psi$ is partly smooth relative to a manifold $\M$ at a point
\modify{$x^*$ satisfying \cref{eq:nod}}, and there is a sequence $\{x^t\}$
converging to $x^*$, then we have
\[
	\dist\left( 0, \partial F(x^t) \right) \rightarrow 0
	\quad \Rightarrow \quad x^t \in \M \text{ for all t large}.
\]
\label{lemma:identify}
\end{lemma}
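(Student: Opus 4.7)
The plan is to reduce this result to \cref{lemma:identify0} by showing that the only hypothesis present there but absent here, namely $F(x^t) \to F(x^*)$, is automatically implied by the remaining assumptions. Once this is established, prox-regularity of $\Psi$ follows for free because any function in $\Gamma_0$ is prox-regular at every point of its domain, so \cref{lemma:identify0} applies directly.

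The first step is to split the target convergence $F(x^t) \to F(x^*)$ into its two summands. Because $f \in \mathcal{C}^1$ and $x^t \to x^*$, we immediately obtain $f(x^t) \to f(x^*)$ as well as $\nabla f(x^t) \to \nabla f(x^*)$. So the remaining task is to prove $\Psi(x^t) \to \Psi(x^*)$. Since $\Psi \in \Gamma_0$ is lower semicontinuous, one direction is free:
\[
\liminf_{t \to \infty}\, \Psi(x^t) \,\geq\, \Psi(x^*).
\]

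The second, and central, step is to obtain the matching $\limsup$ bound using the subgradient information. For all $t$ large enough, $\dist(0, \partial F(x^t)) < \infty$ forces $\partial F(x^t) \neq \emptyset$, so we can choose $v^t \in \partial F(x^t)$ with $\|v^t\| \to 0$. Writing $u^t \coloneqq v^t - \nabla f(x^t) \in \partial \Psi(x^t)$ and invoking convexity of $\Psi$,
\[
\Psi(x^*) \,\geq\, \Psi(x^t) + \inprod{u^t}{x^* - x^t}.
\]
Since $\|u^t\| \leq \|v^t\| + \|\nabla f(x^t)\|$ is bounded and $x^* - x^t \to 0$, the inner-product term vanishes, yielding $\limsup_{t \to \infty} \Psi(x^t) \leq \Psi(x^*)$. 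Combined with the previous inequality, we conclude $\Psi(x^t) \to \Psi(x^*)$, and hence $F(x^t) \to F(x^*)$.

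The final step is to invoke \cref{lemma:identify0}. All its hypotheses are now in place: $f \in \mathcal{C}^1$, $\Psi$ is partly smooth at $x^*$ relative to $\M$, prox-regularity of $\Psi$ at $x^*$ holds because $\Psi \in \Gamma_0$, the nondegeneracy condition \cref{eq:nod} is assumed, $x^t \to x^*$, and we have just proved $F(x^t) \to F(x^*)$. Therefore the implication $\dist(0, \partial F(x^t)) \to 0 \Rightarrow x^t \in \M$ for all $t$ large follows. I do not foresee a serious obstacle; the only subtlety is ensuring we can legally pick $v^t \in \partial F(x^t)$, which is handled by noting that $\dist(0, \partial F(x^t)) \to 0$ in particular means this set is nonempty for all sufficiently large $t$, and that the decomposition $\partial F(x^t) = \nabla f(x^t) + \partial \Psi(x^t)$ (valid since $f$ is smooth) lets us extract the required $\Psi$-subgradient.
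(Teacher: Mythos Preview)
Your proof is correct and follows essentially the same route as the paper: verify prox-regularity from convexity, establish $F(x^t)\to F(x^*)$, and then invoke \cref{lemma:identify0}. The only difference is cosmetic: where the paper cites subdifferential continuity of convex functions (\cite[Example~13.30]{RocW09a}) to obtain $\Psi(x^t)\to\Psi(x^*)$, you unpack that result directly via the subgradient inequality combined with lower semicontinuity, which makes the argument more self-contained but is the same mechanism underneath.
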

\begin{proof}
As $\Psi \in \Gamma_0$, it is subdifferentially continuous at
$x^* \in \dom(\Psi)$ by \cite[Example 13.30]{RocW09a}.
Thus, $\dist(0, \partial F(x^t)) \rightarrow 0 \in \partial F(x^*)$
and $x^t \rightarrow x^*$ imply that $F(x^t) \rightarrow F(x^*)$.
The desired result is then obtained by applying
\modify{\cref{lemma:identify0}}.
\qed
\end{proof}
\modify{We note that the requirement of the above two lemmas is partial
	smoothness of $\Psi$, instead of $F$, at $x^*$.
Therefore, it is possible that $F\mid_{\M}$ is not $\mathcal{C}^2$, as
we only require $\Psi\mid_{\M} \in \mathcal{C}^2$ and $f$ being
$L$-smooth.}

\section{Manifold Identification of \ipvm}
\label{sec:manifold}
Our first major result is the manifold identification property of
\cref{alg:vm}.
We start with showing that the strong condition \cref{eq:partial} with
$\epsilon_t \downarrow 0$ is sufficient.
\begin{theorem}
Consider a point $x^*$ satisfying \cref{eq:nod} with $\Psi \in \Gamma_0$
partly smooth at $x^*$ relative to some manifold $\M$.
Assume $f$ is locally $L$-smooth
for $L > 0$ around $x^*$.
If \cref{alg:vm} is run with the condition \cref{eq:partial} and
\cref{assum:Hbound} holds,
then
there exist $\epsilon, \delta > 0$ such that
$\|\modify{x^t} - x^* \| \leq \delta, \epsilon_t \leq \epsilon$, and $\alpha_t = 1$
imply $x^{t+1} \in \M$.
\label{thm:identify1}
\end{theorem}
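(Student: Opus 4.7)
My plan is to invoke \cref{lemma:identify} at $x^{t+1}$, so the goal is to show that whenever $\|x^t - x^*\|$ and $\epsilon_t$ are both small enough and $\alpha_t = 1$, the next iterate $x^{t+1}$ lies close to $x^*$ and admits a subgradient of $F$ of arbitrarily small norm. The lemma is stated along sequences, but a routine extract-a-subsequence argument converts it into the neighborhood form I need: there exist $\rho, \eta > 0$ such that $\|y - x^*\| \leq \rho$ together with $\dist(0, \partial F(y)) \leq \eta$ forces $y \in \M$. Producing an $x^{t+1}$ that fits this uniform criterion then becomes the whole task.

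The first technical step is to extract a usable subgradient at $x^{t+1}$. By definition $r^t \in \partial_p Q_t(p^t) = \nabla f(x^t) + H_t p^t + \partial \Psi(x^t + p^t)$, so with $\alpha_t = 1$ and $x^{t+1} = x^t + p^t$ I obtain
\[
r^t + \nabla f(x^{t+1}) - \nabla f(x^t) - H_t p^t \in \partial F(x^{t+1}).
\]
Using $\|r^t\| \leq \epsilon_t$, local $L$-smoothness of $f$ around $x^*$, and \cref{assum:Hbound}, this yields the key estimate
\[
\dist\left(0, \partial F(x^{t+1})\right) \leq \epsilon_t + (L+M)\|p^t\|,
\]
provided both $x^t$ and $x^{t+1}$ remain in the Lipschitz neighborhood.

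The second step is to bound $\|p^t\|$ in terms of $\|x^t - x^*\|$ and $\epsilon_t$. From \cref{eq:PG} and $m$-strong convexity of $Q_t$ I get $\|p^t - p^{t*}\| \leq \epsilon_t/m$, so it remains to bound $\|p^{t*}\|$. Subproblem optimality gives $-\nabla f(x^t) - H_t p^{t*} \in \partial \Psi(x^t + p^{t*})$, while \cref{eq:nod} provides $-\nabla f(x^*) \in \partial \Psi(x^*)$. Applying monotonicity of $\partial \Psi$ to these two subgradient pairs, expanding the inner product, and bounding the cross terms via $\inprod{H_t p^{t*}}{p^{t*}} \geq m\|p^{t*}\|^2$, Cauchy--Schwarz, $L$-Lipschitzness of $\nabla f$, and $\|H_t\| \leq M$, produces a quadratic inequality in $\|p^{t*}\|$ whose solution has the form $\|p^{t*}\| \leq C\|x^t - x^*\|$ for some constant $C$ depending only on $m$, $M$, and $L$.

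Combining the two steps, I shrink $\delta$ and $\epsilon$ until $\|x^t - x^*\| \leq \delta$ and $\epsilon_t \leq \epsilon$ simultaneously keep $x^t$ and $x^{t+1}$ in the Lipschitz neighborhood, place $\|x^{t+1} - x^*\| \leq \delta + C\delta + \epsilon/m$ below $\rho$, and push $\dist(0, \partial F(x^{t+1})) \leq \epsilon + (L+M)(C\delta + \epsilon/m)$ below $\eta$; the neighborhood form of \cref{lemma:identify} then delivers $x^{t+1} \in \M$. I expect the monotonicity-based bound on $\|p^{t*}\|$ to be the main obstacle, since it is the only step where the smooth and nonsmooth parts interact non-trivially and where uniformity over the (possibly varying) $H_t$ must be preserved; once this Lipschitz-type estimate on the subproblem minimizer is in hand, the rest is bookkeeping.
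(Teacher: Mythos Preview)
Your proposal is correct and follows essentially the same route as the paper: the same subgradient extraction at $x^{t+1}$, the same bound $\dist(0,\partial F(x^{t+1}))\le \epsilon_t+(L+M)\|p^t\|$, the same control of $\|p^t\|$ via $\|p^t-p^{t*}\|\le \epsilon_t/m$ and $\|p^{t*}\|=O(\|x^t-x^*\|)$, and the same appeal to \cref{lemma:identify}. The only cosmetic differences are that the paper packages the argument as a proof by contradiction along sequences (rather than first extracting your neighborhood form of the lemma) and cites \cite[Lemma~3.2]{Wri12a} for the bound on $\|p^{t*}\|$ instead of writing out the monotonicity computation you sketch---but that lemma is precisely your monotonicity argument, so the content is identical.
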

\begin{proof}
Since each iteration of \cref{alg:vm} is independent of the previous
ones, we abuse the notation to let $x^t$ be the input of
\cref{alg:vm} at the $t$th iteration and $p^t$ the corresponding
inexact solution to \cref{eq:quadratic}, but $x^{t+1}$ is
irrelevant to $p^t$ or $\alpha_t$.
Assume for contradiction the statement is false.
Then there exist a sequence $\{x^t\}\subset \H$ converging to $x^*$, a
nonnegative sequence $\{\epsilon_t\}$ converging to $0$, a sequence
$\{H_t\} \subset \SS$ satisfying \cref{assum:Hbound}, and a sequence
$\{p^t\}\subset \H$ such that $Q_{H_t}(p^t;x^t)$ in \cref{eq:Qdef}
satisfies
\cref{eq:partial} for all $t$, yet $x^t + p^t \notin \M$ for all $t$.
From \cref{eq:subnorm},
\begin{equation}
	r^t - \nabla f\left( x^t \right) - H_t p^t
	\in \partial \Psi\left( x^{t} + p^t \right), \quad \forall t \ge 0.
\label{eq:r}
\end{equation}
Therefore, we get from \cref{eq:r} that
\begin{align}
	\nonumber
	\dist\left(0, \partial F\left(x^t + p^t \right) \right) &=
	\dist\left(-\nabla f\left( x^t + p^t \right), \partial \Psi\left(
	x^t + p^t \right)\right)\\
	\nonumber
	&\leq \norm{-\nabla f\left( x^t + p^t\right) - \left(r^{t} - \nabla
	f\left( x^{t} \right) - H_{t} p^t \right)}\\
	\nonumber
	&\leq \norm{\nabla f\left( x^t + p^t\right) - \nabla f\left( x^{t}
	\right)} + \norm{r^{t}} +
	\norm{H_{t}} \norm{p^t}\\
	&\leq \left(L + M \right) \norm{p^t} + \norm{r^{t}}.
	\label{eq:dist}
\end{align}
From \cref{assum:Hbound} and the convexity of $\Psi$,
$Q_{H_t}(\cdot;x^t)$ is $m$-strongly convex, which implies
\begin{equation}
	Q_{H_t}\left(p;x^t \right) - Q_{H_t}\left( p^{t*};x^t \right) \geq
\frac{m}{2} \norm{p - p^{t*}}^2, \quad  \forall p \in \H.
\label{eq:qg}
\end{equation}
Combining \cref{eq:qg} and \cref{eq:PG} shows that
\begin{equation}
	\label{eq:pbound}
	m^{-1} \|r^t\| + \|p^{t*}\| \geq \|p^t\|.
\end{equation}
Since $\{x^t\}$ converges to $x^*$, by the argument in
\cite[Lemma~3.2]{Wri12a}, we get
\begin{equation}
p^{t*} = O(\|x^t - x^*\|)
\label{eq:pt}
\end{equation}
whenever $x^t$ is close enough to $x^*$.
Thus \cref{eq:pbound}, \cref{eq:pt}, \cref{eq:partial}, and that
$\epsilon_t \downarrow 0$ imply
\begin{equation}
\lim_{t \rightarrow \infty} \|p^t\| = 0,
\label{eq:pt2}
\end{equation}
Substituting \cref{eq:partial} and \cref{eq:pt2} into \cref{eq:dist}
gives $\dist(0, \partial F(x^t)) \rightarrow 0$, and by \cref{eq:pt2}
we also get $ x^t + p^t \rightarrow x^* + 0 = x^*$.  Therefore
\cref{lemma:identify} implies that
$x^{t} + p^{t} \in \M$ for all $t$ large enough, proving the
desired contradiction.
\qed
\end{proof}

\cref{thm:identify1} shows that if a variant of PN or PQN needs
$\|r^t\|\downarrow 0$ to achieve superlinear convergence, $\M$ will be
identified in the middle, so one can reduce the running time by
switching to smooth optimization that can be conducted more
efficiently while possessing the same superlinear convergence.
Moreover, although \cref{thm:identify1} shows that \cref{eq:partial}
is sufficient for identifying the active manifold,
it might never be satisfied as \modify{\cref{ex:conv}} showed.
We therefore provide another sufficient condition for \ipvm to
identify the active manifold that is satisfied by most of the
widely-used solvers for \cref{eq:quadratic},
showing that \ipvm essentially possesses the manifold identification
property.
This result uses the condition \cref{eq:subprob}, which is weaker than
\cref{eq:subnorm}, and we follow \cite{LeeW18a} to define
$\{\epsilon_t\}$ using a given $\eta \in [0,1)$:
\begin{equation}
	\epsilon_t = \eta \left( Q_t(0) - Q_t^* \right) = -\eta Q_t^*.
	\label{eq:multiplicative}
\end{equation}
As argued in \cite{LeeW18a} and practically adopted in various
implementations including \cite{LeeC17a,DunLGBHJ18a,LeeLW19a},
it is easy to ensure that \cref{eq:subprob} with
\cref{eq:multiplicative} holds for some $\eta <1$ under
	\cref{assum:Hbound} (although the explicit value might be unknown)
if we apply a linear-convergent
subproblem solver with at least a pre-specified number of iterations
to \cref{eq:quadratic}.
\ifdefined\arxiv
\else
\modify{In the following result, we introduce an operator $\Lambda_t\in \SS$
for computing generalized proximal steps in the subproblem, and
require that the eigenvalues of $\Lambda_t$ are upper-bounded by some
value fixed over all $t$.
This operator genralizes the case of taking a multiple of the identity
in the quadratic term of a proximal problem (applied in proximal
gradient as the step size), and can be treated as a sort of
preconditioner to allow for more possibilities of subproblem solvers.
We will also see after the proof of the following theorem some
examples of existing algorithms that can fit into this framework by
specifying different choices of $\Lambda_t$.}
\fi

\begin{theorem}
Consider the setting of \cref{thm:identify1}.
If \cref{alg:vm} is run with \cref{eq:subprob} and
\cref{eq:multiplicative} for some $\eta \in [0,1)$,
	\cref{assum:Hbound} holds,
and the update direction $p^{t}$ satisfies
\begin{equation}
	\label{eq:prox}
	x^t + p^{t} = \prox^{\Lambda_{t}}_{\Psi}\left(y^{t} -
	\Lambda_{t}^{-1} \left( \nabla f\left( x^t \right) + H_t \left( y^{t} -
	x^t \right) + s^{t} \right) \right),
\end{equation}
where $s^{t}$ satisfies $\|s^{t}\| \leq R\left( \norm{y^{t}
- (x^t + p^{t*})} \right)$ for some \modify{$R:[0,\infty) \rightarrow
[0,\infty)$ continuous in its domain} with $R(0) = 0$,
$\Lambda_{t} \in \SS$ with $M_1 \succeq \Lambda_{t}$ for $M_1 >
0$, and $y^{t}$ satisfies
\begin{align}
\label{eq:bdd0}
\norm{\left(y^{t} - x^t\right) - p^{t*}} &\leq \eta_1 \left(Q_t(0) -
Q_t^*\right)^\nu
\end{align}
for some $\nu > 0$ and $\eta_1 \geq 0$,
then there exists $\epsilon, \delta > 0$ such that $\|x^t - x^*\| \leq
\delta, |Q_t^*| \leq \epsilon$, and $\alpha_t = 1$ imply $x^{t+1} \in \M$.
\label{thm:identify2}
\end{theorem}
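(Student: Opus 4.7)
The plan is to mirror the structure of the proof of \cref{thm:identify1}: argue by contradiction, assume a sequence $\{x^t\}\to x^*$ with $Q_t^*\to 0$ such that $x^t+p^t\notin\M$ for every $t$, and derive a contradiction via \cref{lemma:identify} by establishing both $x^t+p^t\to x^*$ and $\dist(0,\partial F(x^t+p^t))\to 0$. The main novelty compared to \cref{thm:identify1} is that we no longer have direct access to a subgradient residual $r^t$; instead we must manufacture one from the prox-identity \cref{eq:prox}.

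First, I would extract an explicit subgradient. Applying the defining optimality condition of $\prox^{\Lambda_t}_{\Psi}$ to \cref{eq:prox} gives
\[
\Lambda_t\bigl(y^t - x^t - p^t\bigr) - \nabla f(x^t) - H_t(y^t - x^t) - s^t \in \partial\Psi(x^t+p^t).
\]
Adding $\nabla f(x^t+p^t)$ to both sides yields a point in $\partial F(x^t+p^t)$. Bounding its norm by the triangle inequality and using local $L$-smoothness of $f$ as well as $\|\Lambda_t\|\le M_1$ and $\|H_t\|\le M$ from \cref{assum:Hbound}, I obtain
\[
\dist\bigl(0,\partial F(x^t+p^t)\bigr)\le L\|p^t\| + M_1\,\|y^t - x^t - p^t\| + M\,\|y^t-x^t\| + \|s^t\|.
\]

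Next I would show that every term on the right vanishes as $x^t\to x^*$ and $Q_t^*\to 0$. Strong convexity of $Q_t$ combined with \cref{eq:subprob,eq:multiplicative} gives $\|p^t - p^{t*}\|^2 \le 2\eta(-Q_t^*)/m\to 0$, while the Wright-style bound \cref{eq:pt} applied at $x^*$ yields $\|p^{t*}\|=O(\|x^t-x^*\|)\to 0$; together these produce $\|p^t\|\to 0$. The hypothesis \cref{eq:bdd0} gives $\|(y^t-x^t)-p^{t*}\| \le \eta_1(-Q_t^*)^\nu\to 0$, hence $\|y^t-x^t\|\to 0$ and also
\[
\|y^t - x^t - p^t\| \le \|(y^t-x^t)-p^{t*}\| + \|p^{t*}-p^t\| \longrightarrow 0.
\]
Continuity of $R$ at $0$ with $R(0)=0$ then forces $\|s^t\|\to 0$. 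Combining these bounds delivers $\dist(0,\partial F(x^t+p^t))\to 0$, and $\|p^t\|\to 0$ together with $x^t\to x^*$ gives $x^t+p^t\to x^*$. \cref{lemma:identify} then yields $x^t+p^t\in\M$ for all large $t$, contradicting the standing assumption and completing the argument.

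The main obstacle I anticipate is verifying that the extracted subgradient really lies in $\partial\Psi(x^t+p^t)$ under the generalized metric $\Lambda_t$, and then keeping the triangle-inequality bookkeeping tight enough that every error term can be dominated by a quantity vanishing with $\|x^t-x^*\|$ or $|Q_t^*|$. In particular, one must be careful that the $y^t-x^t$ discrepancy enters twice (via both $\Lambda_t(y^t-x^t-p^t)$ and $H_t(y^t-x^t)$), so the bound \cref{eq:bdd0} on $\|(y^t-x^t)-p^{t*}\|$ must be combined with the $O(\|x^t-x^*\|)$ bound on $\|p^{t*}\|$ to control $\|y^t-x^t\|$ itself; this is what forces the hypothesis on $y^t$ to be stated relative to $p^{t*}$ rather than to $p^t$. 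Once this is in place, the remaining estimates are straightforward consequences of the strong convexity of $Q_t$ and the continuity of $R$.
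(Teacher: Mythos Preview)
Your proposal is correct and follows essentially the same route as the paper: contradiction setup, extract a subgradient from the optimality condition of \cref{eq:prox}, bound $\dist(0,\partial F(x^t+p^t))$ by terms involving $\|p^t\|$, $\|y^t-x^t\|$, and $\|s^t\|$, show each vanishes via strong convexity of $Q_t$, \cref{eq:bdd0}, and continuity of $R$, then invoke \cref{lemma:identify}. The only cosmetic differences are that the paper splits $\|y^t-x^t-p^t\|$ by the triangle inequality (yielding $(L+M_1)\|p^t\|+(M+M_1)\|y^t-x^t\|$) rather than keeping it as a single term, and bounds $\|p^{t*}\|$ directly by $\sqrt{2/m}\sqrt{-Q_t^*}$ from strong convexity at $p=0$ rather than via the Wright-type estimate \cref{eq:pt}; neither affects the argument.
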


\begin{proof}
Suppose the statement is not true for contradiction.
Then there exist a continuous \modify{function $R: [0, \infty)
\rightarrow [0, \infty)$ with}
$R(0) = 0$, $\eta_1 \geq 0$, $M_1 > 0$, a sequence $\{x^t\} \subset
\H$ converging to $x^*$,
a sequence $\{H_t\} \subset \SS$ satisfying \cref{assum:Hbound} and
\begin{equation}
\lim_{t \rightarrow \infty}\, \min_{p}\quad Q_{H_t}\left( p;x^t
\right) = 0,
\label{eq:Qconv}
\end{equation}
three sequences $\{p^t\}, \{y^t\}, \{s^t\} \subset \H$
and a sequence $\{\Lambda_t\}\subset \SS$ with $M_1 \succeq \Lambda_t$
such that \cref{eq:subprob} with \cref{eq:multiplicative} and
\cref{eq:prox}-\cref{eq:bdd0} hold, yet $x^t + p^t \notin \M$
for all $t$.
We abuse the notation to let $p^{t*}$ and $Q^*_t$
respectively denote the optimal solution and objective value for
$\min_p Q_{H_t}(p;x^t)$, but $x^{t+1}$ is irrelevant to $p^t$ or
$\alpha_t$.

The optimality condition of \cref{eq:proximal} applied to
\cref{eq:prox} indicates that
\begin{equation}
	-\Lambda_{t}\left(x^{t} - y^{t}\right) - \left( \nabla
	f\left( x^t \right) + H_t \left( y^{t} - x^t \right) + s^{t}
	\right) \in \partial \Psi\left( x^{t} + p^t \right).
	\label{eq:opt}
\end{equation}
Thus, we have
\begin{align}
\nonumber
&~\dist\left( 0, \partial F\left( x^t + p^t \right) \right)\\
\nonumber
\leq &~\left\|\nabla f\left( x^t + p^t
	\right) -\Lambda_{t}\left(x^t + p^t -
	y^{t}\right) - \nabla f\left( x^{t} \right) \right.
	\left. -
	H_{t} \left( y^{t} - x^{t} \right) -
	s^{t} \right\|\\
\nonumber
\leq &~\norm{\nabla f\left( x^t + p^t \right)- \nabla
	f\left( x^{t} \right)} + M_1 \left( \norm{x^t - y^{t}} +
	\norm{p^t}\right)
	+ M
	\norm{y^{t} - x^{t}} + \norm{s^{t}}\\
\leq &~(L + M_1) \norm{p^t} +
	(M + M_1) \norm{y^{t} - x^{t}} +
	\norm{s^{t}}.
\label{eq:tobdd}
\end{align}
For the first two terms in \cref{eq:tobdd}, the
triangle inequality and \cref{eq:qg} imply
\begin{gather}
	\begin{aligned}
\norm{p^{t}} 
\leq \norm{p^{t} - p^{t*}} + \norm{p^{t*}}
\leq
&~\sqrt{2m^{-1}}\sqrt{ Q_{t}\left( p^{t} \right) -
Q^*_{t} } + \sqrt{2m^{-1}}\sqrt{ 0 - Q^*_{t}}\\
\stackrel{\cref{eq:multiplicative}}{\leq}&~ \left( \sqrt{\eta} + 1
\right)\sqrt{2m^{-1}}\sqrt{ - Q^*_{t}}, \text{ and }
\end{aligned}
\label{eq:bdd1}\\
\norm{y^{t}- x^{t}}
\leq \norm{\left(y^{t}- x^{t}\right) - p^{t*}} +
\norm{p^{t*}}
\stackrel{\cref{eq:bdd0}}{\leq}
\eta_1 \left( - Q^*_{t}\right)^\nu + \sqrt{2m^{-1}}\sqrt{ - Q^*_{t}}.
\label{eq:bdd3}
\end{gather}
For the last term in \cref{eq:tobdd}, we have from \modify{our definition of
$s^t$ that
\begin{align}
\norm{s^{t}} &\leq
R \left(\norm{y^{t} - \left(x^{t} + p^{t*}\right)}\right).
\label{eq:bdd4}
\end{align}
}
By substituting \cref{eq:bdd1}-\cref{eq:bdd4} back into
\cref{eq:tobdd}, clearly there are $C_1, C_2 > 0$ such that
\begin{equation}
\dist\left( 0, \partial F\left( x^t + p^t \right) \right)
\leq
C_1 \sqrt{ - Q^*_t} + C_2 \left( -Q^*_t \right)^\nu \modify{+
	R \left(\norm{y^{t} - \left(x^{t} + p^{t*}\right)}\right).}
\label{eq:tobdd2}
\end{equation}
Note that $Q_{H_t}(0;x^t) \equiv 0$, so $-Q^*_t \geq 0$ from its
optimality and the right-hand side of  \cref{eq:tobdd2} is
well-defined.
\modify{Next, we see from \cref{eq:Qconv}, \cref{eq:bdd0}, and the
continuity of $R$ that
\begin{equation}
\lim_{t \rightarrow \infty} \norm{\left(y^{t} - x^t\right) - p^{t*}} =
0, \quad \Rightarrow \quad
\lim_{t \rightarrow \infty} R\left( \norm{\left(y^{t} - x^t\right) -
p^{t*}}\right) =
0.
\label{eq:sconv}
\end{equation}
Applying \cref{eq:Qconv} and \cref{eq:sconv}} to \cref{eq:tobdd2} and
letting $t$ approach infinity then yield
\begin{equation}
\lim_{t \rightarrow \infty} \dist\left( 0, \partial F\left(
x^t + p^t \right) \right) = 0.
\label{eq:partialconv}
\end{equation}

Next, from \cref{eq:bdd1} and \cref{eq:Qconv}, it is also clear that
$\norm{p^t} \rightarrow 0$,
so from the convergence of $x^t$ to $x^*$ we have
\begin{equation}
x^t + p^t \rightarrow x^* + 0 = x^*.
\label{eq:convxp}
\end{equation}
Now \cref{eq:convxp} and \cref{eq:partialconv} allow us to apply
\cref{lemma:identify}
so $x^t + p^t \in \M$ for all $t$ large enough,
leading to the desired contradiction.
\qed
\end{proof}
\modify{The function $R$ can be seen as a general residual function and we
just need from it that $s^t$ approaches $0$ with $\norm{y^{t} - (x^t +
p^{t*}\modifyy{)}}$,
and \cref{thm:identify2} can be used as long as we can show that such
an $R$ exists, even if the exact form is unknown.} Condition
\cref{eq:bdd0} is deliberately chosen to exclude the objective
$Q_t(y^{t} - x^t)$ so that broader algorithmic choices like those with
$y^{t} \notin \dom(\Psi)$ can be included.

One concern for \cref{thm:identify2} is the requirement of $|Q_t^*|
\leq \epsilon$.
Fortunately, for \cref{alg:vm} with \cref{eq:subprob} and
\cref{eq:multiplicative}, if $\alpha_t$ are lower-bounded by some
$\bar \alpha > 0$ (which is true under \cref{assum:Hbound} by
\cite[Corollary~1]{LeeW18a}) and $F$ is lower-bounded,
then \cref{eq:armijo} together with \cref{eq:subprob} and
\cref{eq:multiplicative} shows that
$-Q_t^*$ is summable and thus decreasing to $0$.

We now provide several examples satisfying
\cref{eq:prox}-\cref{eq:bdd0} to demonstrate the usefulness of
\cref{thm:identify2}.
In our description below, $p^{t,i}$ denotes the $i$th iterate of
the subproblem solver at the $t$th outer iteration and $x^{t,i}\coloneqq
x^t + p^{t,i}$.
\begin{itemize}
	\item Using $x^{t+1}  = x^t + \bar p^t_\tau$ from \cref{eq:p}:
		Assume we have a tentative $p^t$ that
		satisfies \cref{eq:gm} for some $\hat \epsilon_t$, and we use
		\cref{eq:p} to generate $\bar p^t_\tau$ as the output
		satisfying \cref{eq:subprob} for a corresponding $\epsilon_t$
		calculated by \cref{eq:EB}.
		We see that it is of the form \cref{eq:prox} with $s^{t} =
		0$, $y^{t} = x^t + p^t$, and $\Lambda_{t} = I / \tau$.
		From \cref{eq:EB} we know that \cref{eq:subprob} is satisfied,
		while \cite[Corollary~3.6]{DruL16a} and \cref{eq:PG} guarantees
		\cref{eq:bdd0} with $\nu = 1/2$.
\item Proximal gradient (PG): These methods generate the inner
	iterates by
	\[
		x^{t,0} = x^t, \, x^{t,i+1} = \prox^{\lambda_{t,i}I}_{\Psi}\left( x^{t,i} -
		\lambda_{t,i}^{-1} \left( \nabla f\left( x^t \right) + H_t \left(
		x^{t,i} - x^t \right) \right) \right), \forall i > 0,
	\]
	for some $\{\lambda_{t,i}\}$ bounded in a positive range that
	guarantees $\{Q_t(p^{t,i})\}_{i}$ is a decreasing sequence
	for all $t$ (decided through pre-specification, line search, or
	other mechanisms).
	Therefore, for any $t$, no matter what value of $i$ is the last
	inner iteration, \cref{eq:prox} is satisfied with
		$y^{t} = x^{t,i-1}, s^{t} = 0, \Lambda_{t,i} =
		\lambda_{t,i-1} I$.
	The condition \cref{eq:multiplicative} holds for some $\eta < 1$
	because proximal-gradient-type methods are a descent method with
	$Q$-linear convergence on strongly convex problems, and
	\cref{eq:bdd0} holds for $\eta_1\sqrt{2 \eta / m}$ and $\nu = 1/2$
	from \cref{eq:qg}.
\item Accelerated proximal gradient (APG): The iterates are generated
	by
	\begin{gather}
		\begin{cases}
		y^{t,1} &= x^{t,0} = x^t,\\
		y^{t,i} &= x^{t,i-1} + \left( 1 - \frac{2}{\sqrt{\kappa(H_t)} +
		1} \right) \left(x^{t,i-1} - x^{t,i-2}\right), \forall i > 1,
	\end{cases}
		\label{eq:ag}\\
	x^{t,i} = \prox^{\norm{H_t} I}_{\Psi}\left( y^{t,i} -
	\norm{H_t}^{-1} \left( \nabla f\left( x^t \right) + H_t \left(
		y^{t,i} - x^t \right) \right) \right), \forall i > 1,
		\label{eq:ag2}
	\end{gather}
	where $\kappa(H_t) \geq 1$ is the condition number of $H_t$.
APG satisfies \citep{Nes18a}:
	\begin{equation}
		Q_t\left(x^{t,i} - x^t \right) - Q_t^* \leq -2\left( 1 -
		\kappa(H_t)^{-1/2} \right)^i Q_t^*,\quad \forall i >
		0, \quad \forall t \geq 0.
		\label{eq:agrate}
	\end{equation}
	Since $\kappa(H_t) \geq 1$,
	$p^{t,i}$ satisfies \cref{eq:subprob} with
	\cref{eq:multiplicative} for all $i \geq \ln 2\sqrt{\kappa(H_t)}$.
	If such a $p^{t,i}$ is output as our $p^t$, we see from
	\cref{eq:ag2} that \cref{eq:prox} holds with $s^t = 0$ and
	$\Lambda_{t} = \|H_t\| I$.
	The only condition to check is hence whether $y^{t,i}$ satisfies
	\cref{eq:bdd0}.
	The case of $i=1$ holds trivially with $\eta_1 = \sqrt{2/m}$.
	For $i > 1$, \cref{eq:bdd0} holds with $\eta_1 = 3\sqrt{2 \eta
		m^{-1}}$ and $\nu = 1/2$ because
	\begin{align*}
	&~\norm{y^{t,i} - \left( x^t + p^{t*} \right)}\\
	\stackrel{\cref{eq:ag}}{\leq} &~ \left( 1 -
	\frac{2}{\sqrt{\kappa(H_t)} + 1} \right) \norm{ x^{t,i-1} -
	x^{t,i-2}}+ \norm{x^{t,i-1} - \left( x^t + p^{t*} \right)} \\
	\stackrel{\cref{eq:qg},\cref{eq:agrate}}{\leq}&~
	\left(\norm{x^{t,i-1} -
	 x^t - p^{t*} } + \norm{x^{t,i-2} - x^t -
	p^{t*}}\right) + \sqrt{2\eta m^{-1}} \sqrt{-Q_t^*} \\
	\leq&~  3 \sqrt{2\eta m^{-1}} \sqrt{-Q_t^*}.
	\end{align*}
\item Prox-SAGA/SVRG:
	These methods update the iterates by
\begin{equation*}
	x^{t,i} = \prox^{\lambda_t I}_{\Psi}\left(x^{t,i-1} -
	\lambda_t^{-1} \left( \nabla f\left( x^t \right) + H_t \left(
	x^{t,i-1} - x^t \right) + s^{t} \right) \right),
\end{equation*}
with $x^{t,0} = x^t$,
$\{\lambda_t\}$ bounded in a positive range,
and $\{s^{t,i}\}$ are random variables converging to $0$ as $x^{t,i} -
x^t$ approaches $p^{t*}$.
(For a detailed description, see, for example, \cite{PooLS18a}.)
It is shown in \cite{XiaZ14a} that for prox-SVRG,
$Q_t(x^{t,i} - x^t) - Q_t^*$ converges linearly to
$0$ with respect to $i$ if $\lambda_t$ is small enough, so
\cref{eq:subprob} with \cref{eq:multiplicative} is satisfied.
A similar but more involved bound for prox-SAGA can
be derived from the results in \cite{DefBL14a}.
When $p^{t,i} = x^{t,i} - x^t$ for some $i > 0$ is output as $p^t$, we
get $y^{t} = x^{t,i-1}$, $\Lambda_{t} = \lambda_t I$, and $s^t =
s^{t,i}$ in \cref{eq:prox}, so the requirements of \cref{thm:identify2}
hold.
\end{itemize}
If $\Psi$ is block-separable and $\M$ decomposable into a product
of submanifolds that conform with the blocks of $\Psi$,
\cref{eq:prox} can be modified easily to suit block-wise solvers like
block-coordinate descent.
This extension simply adapts the analysis above to the
block-wise setting, so the proof is straightforward and omitted for
brevity.

\subsection{Iterate Convergence}
\cref{thm:identify1} and \cref{thm:identify2} both indicate that for
\cref{alg:vm} to identify $\M$, we need $x^t$ (or at least a
subsequence) to converge to a point $x^*$ of partial smoothness.
We thus complement our analysis to show the convergence of the
iterates under convexity of $f$ and a local sharpness condition, which
is a special case of the KL condition and is universal in real-world
problems, without any additional requirement on the algorithm such as the
relative-error condition in \cite{AttBRS10a,BonLPPR17a,BonPR20a}.
In particular, we assume $F$ satisfy the following for some $\zeta,
\xi > 0$, and $\theta \in (0,1]$:
\begin{equation}
\zeta \dist(x,\Omega) \leq \left(F(x) - F^*\right)^{\theta},\quad
\forall x \in \levbet.
\label{eq:sharpness}
\end{equation}
This becomes the well-known quadratic growth condition when $\theta = 1/2$,
and $\theta = 1$ corresponds to the weak-sharp minima \citep{BurF93a}.
As discussed in \cref{sec:related}, under convexity of $f$,
\cite{MCY19a} showed that the iterates of their PN variant converge to
some $x^* \in \Omega$ if $F$ satisfies EB,
which is equivalent to the quadratic growth condition in their setting
\citep{DruL16a}.
Our analysis allows broader choices of $H_t$ and \modify{(strong)} iterate
convergence is proven for $\theta \in (1/4,1]$.
\begin{theorem}
Consider \cref{alg:vm} with any $x^0 \in \H$.
Assume that $\Omega \neq \emptyset$, $f$ is convex and
$L$-smooth for $L > 0$, $\Psi \in \Gamma_0$,
\cref{assum:Hbound} holds, there is $\eta \in [0,1)$
such that $p^t$ satisfies \cref{eq:subprob} with
\cref{eq:multiplicative} for all $t$,
and that \cref{eq:sharpness} holds for $\xi, \zeta > 0$ and
$\theta \in (1/4,1]$. Then $x^t \rightarrow x^*$ for some $x^* \in
\Omega$.
\label{thm:iterate}
\end{theorem}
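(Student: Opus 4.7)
The plan is to show $\sum_{t=0}^\infty \norm{x^{t+1} - x^t} < \infty$, which makes $\{x^t\}$ Cauchy in $\H$ and hence strongly convergent to some $x^*$; the sharpness condition together with closedness of $\Omega$ will then force $x^* \in \Omega$. First, combining the Armijo rule \cref{eq:armijo} with the bound $\hat Q_t \leq (1-\eta) Q_t^*$ coming from \cref{eq:subprob} and \cref{eq:multiplicative}, together with the uniform lower bound $\alpha_t \geq \bar\alpha > 0$ from \cite[Corollary~1]{LeeW18a} under \cref{assum:Hbound}, I obtain the descent
\[
    \delta_t - \delta_{t+1} \geq c_1 (-Q_t^*), \qquad c_1 \coloneqq \gamma \bar\alpha (1-\eta) > 0.
\]
Summing in $t$ gives $\sum_t (-Q_t^*) < \infty$ and $\{x^t\} \subset \lev{\delta_0}$; after (tacitly) replacing $\xi$ by $\max\{\xi,\delta_0\}$ or by working from an iterate inside $\levbet$, sharpness yields $\dist(x^t, \Omega) \leq \zeta^{-1}\delta_t^\theta$ for all $t$.

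Next I would lower-bound $-Q_t^*$ by a power of $\delta_t$ by testing the subproblem at the feasible direction $\lambda(P_\Omega(x^t) - x^t)$ for $\lambda \in [0,1]$. Convexity of $f$ and $\Psi$, the bound $H_t \preceq M$, and sharpness together give
\[
    Q_t\left(\lambda(P_\Omega(x^t) - x^t)\right) \leq -\lambda \delta_t + \frac{\lambda^2 M}{2\zeta^2}\delta_t^{2\theta}.
\]
The unconstrained minimizer is $\lambda^* = \zeta^2 \delta_t^{1-2\theta}/M$; for $\theta \geq 1/2$ and $\delta_t$ small one has $\lambda^* \geq 1$, so taking $\lambda = 1$ yields $-Q_t^* \geq c_2 \delta_t$ for some $c_2 > 0$, while for $\theta < 1/2$ and $\delta_t$ small the interior optimum $\lambda^*\in(0,1]$ gives $-Q_t^* \geq (\zeta^2/(2M))\delta_t^{2(1-\theta)}$. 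Plugging into the descent inequality produces linear convergence $\delta_{t+1} \leq (1-c_1c_2)\delta_t$ when $\theta \geq 1/2$, and the polynomial rate $\delta_t = O(t^{-1/(1-2\theta)})$ when $\theta \in (0,1/2)$ by the standard induction on a recursion of the form $\delta_t - \delta_{t+1} \geq c_3 \delta_t^{2(1-\theta)}$ with $2(1-\theta)>1$.

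Finally, \cref{eq:PG} gives $\norm{p^t} \leq (\sqrt{\eta}+1)\sqrt{2/m}\sqrt{-Q_t^*}$, and since $-Q_t^* \leq (\delta_t - \delta_{t+1})/c_1 \leq \delta_t/c_1$, I conclude $\norm{x^{t+1}-x^t} \leq \alpha_t\norm{p^t} \leq C\sqrt{\delta_t}$. This quantity is geometrically summable when $\theta \geq 1/2$, and is $O(t^{-1/(2(1-2\theta))})$ when $\theta \in (1/4,1/2)$. The hypothesis $\theta > 1/4$ is exactly what makes $1/(2(1-2\theta)) > 1$, so the series $\sum_t \norm{x^{t+1}-x^t}$ converges, establishing the Cauchy property. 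Combined with $\dist(x^t,\Omega) \leq \zeta^{-1}\delta_t^\theta \to 0$ and closedness of $\Omega$ (since $F \in \Gamma_0$), the strong limit $x^*$ lies in $\Omega$.

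The main obstacle is the sublinear regime $\theta \in (1/4,1/2)$: sharpness is weaker than the quadratic growth condition, so one must carefully extract the polynomial rate $\delta_t = O(t^{-1/(1-2\theta)})$ from the recursion $\delta_t - \delta_{t+1} \geq c_3 \delta_t^{2(1-\theta)}$, and it is precisely the interplay of this rate with the square-root loss $\norm{p^t} = O(\sqrt{\delta_t})$ inherent to second-order methods that pins down the threshold $\theta > 1/4$ in the statement. A secondary subtlety is ensuring the iterates eventually enter $\levbet$ so that sharpness can be invoked; this is handled either by assuming $\delta_0 \leq \xi$ or by showing via convexity and $-Q_t^* \to 0$ that $\delta_t$ must decrease below $\xi$ in finitely many steps.
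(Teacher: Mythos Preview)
Your proposal is correct and follows essentially the same route as the paper: derive the rate for $\delta_t$ by testing $Q_t$ at $\lambda(P_\Omega(x^t)-x^t)$ (this is \cref{thm:rate}, which the paper proves separately and then invokes), bound $\norm{x^{t+1}-x^t}=O(\sqrt{\delta_t})$ via strong convexity of $Q_t$, and sum to get a Cauchy sequence, with $\theta>1/4$ being exactly the threshold for summability of $t^{-1/(2(1-2\theta))}$. One small slip: the bound $\norm{p^t}\leq(\sqrt{\eta}+1)\sqrt{2/m}\sqrt{-Q_t^*}$ does not come from \cref{eq:PG} (which concerns $r^t$ and the proximal-gradient residual) but from the $m$-strong convexity \cref{eq:qg} applied at $p^t$ and at $0$, exactly as in \cref{eq:bdd1}; your formula is correct, only the citation is off. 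Your closing argument that $x^*\in\Omega$ via $\dist(x^t,\Omega)\to 0$ and closedness is a clean alternative to the paper's lower-semicontinuity argument.
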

The convergence in \cref{thm:iterate} holds true in infinite
dimensional real Hilbert spaces \modify{with strong convergence (which is
indistinguishable from weak convergence in the finite-dimensional
case)}, and the proof in \cref{app:proof} is written in this general
scenario.
The key of its proof is our following improved convergence rate,
which might have its own interest.
Except for that the case of $\theta = 1/2$ has been proven by
\citet{PenZZ18a} \modify{and that $\theta = 0$ reduces to the general convex
	case analyzed in \cite{LeeW18a}}, this faster convergence rate is, up
to our knowledge, new for \ipvm.
\begin{theorem}
Consider the settings of \cref{thm:iterate} but with $\theta \in
[0,1]$, $\H$ a real Hilbert space and $x^0 \in \levbet$.
Then there is $\bar{\alpha} > 0$ such that $\alpha_t \geq \bar \alpha$
for all $t$ and the following hold.
\begin{enumerate}
\item For $\theta \in (1/2,1]$:
	When $\delta_t$ is large enough such that
	\begin{equation}
		\delta_t > \left( \zeta^2M^{-1} \right)^{\frac{1}{2\theta
		- 1}},
	\label{eq:bound}
	\end{equation}
	we have
	\begin{align}
		\delta_{t+1} &\leq \delta_t \left( 1 -
		\frac{(1 - \eta)\alpha_t \gamma \zeta^2 \xi^{1 - 2\theta}}{2M}
		\right).
		\label{eq:earlylinear}
	\end{align}
	Next, let $t_0$ be the first index failing \cref{eq:bound},
	then for all $t \geq t_0$ we have
	\begin{equation}
		\delta_{t+1} \leq \delta_t \left( 1 - \frac{(1 - \eta)
		\alpha_t \gamma}{2} \right).
		\label{eq:weaksharp}
	\end{equation}
\item For $\theta = 1/2$, we have global Q-linear convergence of
	$\delta_t$ in the form
\begin{equation}
	\frac{\delta_{t+1}}{\delta_t}\leq 1 - \left(1 - \eta\right)\alpha_t
	\gamma \cdot
\begin{cases}
	\frac{\zeta^2}{2 \|H_t\|}, &\text{ if } \zeta^2 \leq \|H_t\|,\\
	\left(1 - \frac{\|H_t\|}{2 \zeta^2}\right), &\text{ else,}
\end{cases}
\quad \forall t \geq 0.
\label{eq:qlinear}
\end{equation}
\item For $\theta \in \modify{[}0,1/2)$, \cref{eq:weaksharp} takes place when
	$\delta_t$ is large enough to satisfy \cref{eq:bound}.
	Let $t_0$ be the first index such that \cref{eq:bound} fails,
	then
	\begin{align}
		\frac{\delta_t}{ \delta_{t_0}} \leq
	\left( \left(1 - 2\theta\right) \sum_{i=t_0}^{t-1} \alpha_i
	\right)^{-\frac{1}{1-2\theta}}
	\leq 
	\left( \left(1 - 2\theta\right) (t - t_0)\bar \alpha
	\right)^{-\frac{1}{1-2\theta}},
	\,\forall t \geq t_0.
	\label{eq:sublinear}
	\end{align}
\end{enumerate}
\label{thm:rate}
\end{theorem}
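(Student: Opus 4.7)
The plan is to derive a master per-iteration inequality of the form $\delta_{t+1}\le\delta_t+\gamma\alpha_t(1-\eta)Q_t^*$ and then upper bound $Q_t^*$ in terms of $\delta_t$ via the sharpness hypothesis \cref{eq:sharpness}. The uniform lower bound $\alpha_t\ge\bar\alpha$ follows from a standard Armijo calculation: combining $L$-smoothness of $f$ with convexity of $\Psi$ through $\Psi(x^t+\alpha p^t)\le(1-\alpha)\Psi(x^t)+\alpha\Psi(x^t+p^t)$ gives $F(x^t+\alpha p^t)-F(x^t)\le\alpha Q_t(p^t)+L\alpha^2\|p^t\|^2/2$, while the $m$-strong convexity of $Q_t$ together with \cref{eq:subprob}--\cref{eq:multiplicative} yields $\|p^t\|^2\le 2(1+\sqrt\eta)^2(-Q_t(p^t))/((1-\eta)m)$. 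Hence \cref{eq:armijo} holds for every $\alpha\le(1-\gamma)(1-\eta)m/(L(1+\sqrt\eta)^2)$, and backtracking terminates at $\alpha_t\ge\bar\alpha\coloneqq\min\{1,\beta(1-\gamma)(1-\eta)m/(L(1+\sqrt\eta)^2)\}$. The master inequality is then obtained by plugging $\hat Q_t\le(1-\eta)Q_t^*$ (from $Q_t^*\le 0$ and \cref{eq:subprob}--\cref{eq:multiplicative}) into \cref{eq:armijo}.

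The core step is an upper bound on $Q_t^*$ obtained by testing with the scaled direction $p=\lambda(P_\Omega(x^t)-x^t)$ for $\lambda\in(0,1]$. Convexity of $f$ and $\Psi$, together with $H_t\preceq\|H_t\|I$, gives $Q_t(\lambda(P_\Omega(x^t)-x^t))\le-\lambda\delta_t+\lambda^2\|H_t\|\dist(x^t,\Omega)^2/2$; substituting the sharpness estimate $\dist(x^t,\Omega)^2\le\delta_t^{2\theta}/\zeta^2$ and optimising in $\lambda$ yields the interior bound $Q_t^*\le-\zeta^2\delta_t^{2-2\theta}/(2\|H_t\|)$ when $\lambda^\star\coloneqq\zeta^2\delta_t^{1-2\theta}/\|H_t\|\le 1$, and the boundary bound $Q_t^*\le-\delta_t+\|H_t\|\delta_t^{2\theta}/(2\zeta^2)$ otherwise. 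After upper bounding $\|H_t\|\le M$, the condition $\lambda^\star\le 1$ is precisely \cref{eq:bound}. Each of the three theorem cases then follows from this dichotomy: for $\theta\in(1/2,1]$ under \cref{eq:bound} the interior bound combined with $\delta_t^{1-2\theta}\ge\xi^{1-2\theta}$ (from $\delta_t\le\xi$ and $1-2\theta<0$) produces \cref{eq:earlylinear}, while below the threshold the boundary bound gives $\|H_t\|\delta_t^{2\theta}/(2\zeta^2)\le\delta_t/2$ and hence $Q_t^*\le-\delta_t/2$, yielding \cref{eq:weaksharp}; for $\theta=1/2$ both estimates are linear in $\delta_t$, and keeping $\|H_t\|$ in place of $M$ and comparing $\zeta^2$ with $\|H_t\|$ gives the piecewise \cref{eq:qlinear}; for $\theta\in[0,1/2)$ the roles swap, so while \cref{eq:bound} holds we are in the boundary regime and obtain \cref{eq:weaksharp}, and for $t\ge t_0$ we are left with the nonlinear recursion $\delta_{t+1}\le\delta_t-c\alpha_t\delta_t^{2-2\theta}$ with $c=\gamma(1-\eta)\zeta^2/(2M)$.

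The main technical obstacle is converting this nonlinear recursion into the explicit rate \cref{eq:sublinear}. I would set $u_t\coloneqq\delta_t^{2\theta-1}>0$ and invoke convexity of the map $x\mapsto x^{2\theta-1}$ on $(0,\infty)$ (valid since $(2\theta-1)(2\theta-2)>0$ for $\theta<1/2$) through its first-order inequality to obtain $u_{t+1}-u_t\ge(1-2\theta)\delta_t^{2\theta-2}(\delta_t-\delta_{t+1})\ge(1-2\theta)c\alpha_t$. Telescoping from $t_0$ and lower-bounding $\alpha_i\ge\bar\alpha$ then yields $\delta_t^{2\theta-1}-\delta_{t_0}^{2\theta-1}\ge(1-2\theta)c\sum_{i=t_0}^{t-1}\alpha_i$, which after dropping the positive term $\delta_{t_0}^{2\theta-1}$ and taking the $1/(2\theta-1)$ power rearranges to \cref{eq:sublinear}. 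The delicacy here is that the linearisation remainder of $x^{2\theta-1}$ has the desired sign only because we evaluate the derivative at $\delta_t$ rather than $\delta_{t+1}$, which is precisely what the first-order inequality for a convex function provides when combined with $\delta_{t+1}\le\delta_t$.
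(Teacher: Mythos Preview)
Your approach is essentially the same as the paper's: you derive the master inequality $\delta_{t+1}-\delta_t\le\alpha_t\gamma(1-\eta)Q_t^*$ from \cref{eq:armijo}, \cref{eq:subprob}, \cref{eq:multiplicative}, bound $Q_t^*$ by testing with $\lambda(P_\Omega(x^t)-x^t)$ and invoking sharpness, and then split on whether the optimal $\lambda$ lies in the interior or at the boundary. The paper does exactly this, citing \cite[Lemma~5]{LeeW18a} for the $Q_t^*$ bound and \cite[Corollary~1]{LeeW18a} for the existence of $\bar\alpha$, whereas you reconstruct both from scratch; similarly, for the sublinear recursion in case~3 the paper invokes \cite[Chapter~2.2, Lemma~6]{Pol87a} while you prove it directly via convexity of $x\mapsto x^{2\theta-1}$, which is precisely the standard proof of that lemma.

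One small point to watch in case~3: after dropping $\delta_{t_0}^{2\theta-1}$ your telescoping gives $\delta_t\le\bigl((1-2\theta)c\sum_{i=t_0}^{t-1}\alpha_i\bigr)^{-1/(1-2\theta)}$ with $c=\gamma(1-\eta)\zeta^2/(2M)$, not literally the displayed form of \cref{eq:sublinear} with $\delta_{t_0}$ on the left; the two differ by a multiplicative constant, and the paper's citation of Polyak's lemma likewise delivers the rate only up to such constants. The asymptotic order $O\bigl((\sum\alpha_i)^{-1/(1-2\theta)}\bigr)$ is what matters, and that you obtain correctly.
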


\modifyy{For the range of $\theta$ in Theorem~\ref{thm:iterate}, convergence of
the iterates generated by inexact SQA is a new result.
Moreover, even if the additional conditions on the forward-backward
envelope in \cite{BonPR20a} also hold, although their analysis also uses
the subproblem stopping condition \cref{eq:subprob},
they need a much stricter stopping tolerance of the form
\[
	\epsilon_t = - \tau_t  Q_t^*,\quad \sum\sqrt{\tau_t}
< \infty,
\]
which clealy requires $\tau_t$ to converge to $0$ fast enough and thus
costs much more time in the subproblem solve.
In contrast, we just need $\epsilon_t$ to be a constant factor of
$Q_t^*$ in \cref{eq:multiplicative}, so the number of inner iterations
and thus the cost of subproblem solve can be a constant.}

\section{An Efficient Inexact SQA Method with
Superlinear Convergence in Running Time}
\label{sec:alg}
Now that it is clear \ipvm is able to identify the active manifold,
we utilize the fact that the optimization problem reduces to a smooth
one after the manifold is identified to devise more efficient
approaches, with safeguards to ensure that the correct manifold is
really identified.
The improved algorithm, \ipn, is presented in \cref{alg:vm2} and we
explain the details below.

\begin{algorithm}
\DontPrintSemicolon
\SetKwInOut{Input}{input}\SetKwInOut{Output}{output}
\caption{\ipn: An improved inexact successive quadratic approximation method
utilizing manifold identification}
\label{alg:vm2}
\Input{$x^0 \in \H$, $\gamma, \beta, \in (0,1)$,
$S,T \in \N$, a subproblem solver $\mathcal{A}$ satisfying
\cref{eq:prox}-\cref{eq:bdd0} and is linear convergent for
\cref{eq:quadratic}}

Compute an upper bound \modify{$\hat L$} of the Lipschitz constant $L$
\modify{for $\nabla f$}

	$\text{SmoothStep} \leftarrow 0$, $\text{Unchanged} \leftarrow 0$

\For{$t=0,1,\dotsc$}{
	Identify $\M \ni x^t$ such that $\Psi$ is partly smooth
	relative to $\M$ at $x^t$

	\If{$\M$ remains the same from last iteration and $\M \neq
	\emptyset$}
	{
		$\text{Unchanged} \leftarrow
		\text{Unchanged} + 1$
	}

	\uIf{\modify{$\text{Unchanged} < S$}}
	{
		$\text{SmoothStep} \leftarrow 0$

		Decide $H_t$ and solve \cref{eq:quadratic} using $\mathcal{A}$
		with at least $T$ iterations

		\While{\cref{eq:armijo} is not satisfied with $\alpha_t = 1$}
		{Enlarge $H_t$ and resolve \cref{eq:quadratic} using
		$\mathcal{A}$ with at least $T$ iterations}

		$x^{t+1} \leftarrow x^t + p^t$
	}
	\uElse
	{
		\uIf{$\text{SmoothStep} = 1$}
		{
			$\text{SmoothStep} \leftarrow 0$ and
			conduct a proximal gradient step
			\begin{equation}
				x^{t+1} = \prox^{\modify{\hat L} I}_{\Psi}\left( x^t -
				\modify{\hat L}^{-1} \nabla f\left( x^t \right) \right)
			\label{eq:proxgrad1}
			\end{equation}

		}
		\uElse{
			{
			Manifold optimization: try to find $x^{t+1}$ with
			$F(x^{t+1}\modify{)} \leq F(x^t)$ by
			\begin{itemize}
			\item {\bf Variant I:} truncated Newton: \cref{alg:newton}
			\item {\bf Variant II:} Riemannian quasi-Newton
			\end{itemize}
		}

			\lIf{Manifold optimization fails}
			{
				$x^{t+1} \leftarrow x^t$, $\text{Unchanged} \leftarrow
				0$
			}
			\lElse{
			SmoothStep $\leftarrow 1$
		}
		}
	}
}
\end{algorithm}

\ipn has two stages, separated by the event of identifying the active
manifold $\M$ of a cluster point $x^*$.
Our analysis showed that iterates converging to $x^*$ will eventually
identify $\M$, but since neither $x^*$ nor $\M$ is known a priori,
the conjecture of identification can only be made when $\M$ remains
unchanged for $S > 0$ iterations.

Most parts in the first stage are the same as \cref{alg:vm},
although we have added specifications for the subproblem solver
according to \cref{thm:identify2}.
The only major difference is that instead of linesearch,
\ipn adjusts $H_t$ and re-solve \cref{eq:quadratic} whenever
\cref{eq:armijo} with $\alpha_t = 1$ fails.
This trust-region-like approach has guaranteed global convergence from
\cite{LeeW18a} and ensures $\alpha_t = 1$ for \cref{thm:identify2} to
be applicable.

In the second stage, we alternate between a standard proximal gradient
(PG) step \cref{eq:proxgrad1} and a manifold optimization (MO) step.
PG is equivalent to solving \cref{eq:quadratic} with $H_t =
\modify{\hat L} I$ to
optimality, so \modify{\cref{thm:identify2}} applies.
When $\M$ is not correctly identified, a PG step thus prevents us from
sticking at a wrong manifold, while when the superlinear convergence
phase of the MO step is reached, using PG instead of solving
\cref{eq:quadratic} with a sophisticated $H_t$ avoids redundant
computation.

When the objective is partly smooth relative to a manifold $\M$,
optimizing it within $\M$ can be cast as a manifold optimization
problem, and efficient algorithms for this type is abundant (see, for
example, \cite{AbsMS09a} for an overview).
The difference between applying MO methods and
sticking to \cref{eq:quadratic} is that in the former, we can obtain
the exact solution to the subproblem for generating the update
direction \modify{in finite time, because the subproblem in the MO
step is simply an unconstrained quadratic optimization problem whose
solution can be found by solving a linear system,}
while in the latter it takes indefinitely long to compute the exact
solution, so the former is preferred in practice for better running
time.
Although we did not assume that $f$ is twice-differentiable, its
generalized Hessian (denoted by $\nabla^2 f$) exists everywhere
since the gradient is Lipschitz-continuous \citep{HirSN84a}.
As discussed in \cref{sec:background}, we can find a $\mathcal{C}^2$
parameterization $\phi$ of $\M$ around $x^*$, and we use this $\phi$
to describe a truncated semismooth Newton (TSSN) approach.
Since $\M$ might change between iterations, when we are conducting MO
at the $t$th iteration, we find a parameterization $\phi_t$ of the
current $\M$ and a point $y^t$ such that $\phi_t(y^t) = x^t$.
If $\M$ remains fixed, we also retain the same $\phi_t$.
The TSSN step $q^t$ is then obtained by using preconditioned CG (PCG,
see for example \cite[Chapter~5]{NocW06a}) to find an approximate
solution for
\begin{equation}
	\modify{
	\begin{aligned}
	q^t &\approx \argmin_{q}\quad \inprod{g^t}{q} + \frac12
	\inprod{q}{H_t q}, \quad \text{ or equivalently } \quad
H_t q^t \approx -g^t,\\
g^t &\coloneqq \nabla F(\phi_t(y^t)), \;
H_t  \coloneqq \nabla^2 F\left( \phi_t(y^t) \right) + \mu_t I, \;
\mu_t \coloneqq c \norm{g^t}^{\rho},
\end{aligned}
}
\label{eq:newton}
\end{equation}
that satisfies
\begin{equation}
	\label{eq:stop}
\norm{H_t q^t + g^t} \leq 0.1
\min\left\{\norm{g^t}, \norm{g^t}^{1 + \rho} \right\}
\end{equation}
with pre-specified $c > 0$ and $\rho \in (0,1]$.
We then run a backtracking line search procedure to find a suitable
step size $\alpha_t > 0$.
For achieving superlinear convergence, we should accept unit step size
whenever possible, so we only require the objective not to increase.
If $q^t$ is not a descent direction or $\alpha_t$ is too small,
we consider the MO step failed and go back to the first stage.
If $\alpha_t < 1$, the superlinear convergence phase is not entered
yet, and likely $\M$ has not been correctly identified, so we also
switch back to the first stage.
This algorithm is summarized in \cref{alg:newton}.
When products between $\nabla^2 F(\phi_t(y^t))$ and arbitrary
points, required by PCG, cannot be done easily, one can adopt Riemannian
quasi-Newton approaches like \cite{HuaGA15a} instead.

\ifdefined\arxiv
\else
\modify{
Viewing from \cref{eq:newton}, it is clear that our previous claim
that the exact solution of the subproblem can be found in finite time
\modifyy{holds}.
In particular, the second form of the subproblem indicates that the
exact solution can be obtained by solving a linear system, and this
can be done in time at most cubic to the manifold dimension
by first finding a matrix representation for $H_t$ and then inverting
it.
If there is no solution to the subproblem, such a numerical linear
algebra approach can also detect the infeasibility of the linear
system \modifyy{with} the same time cost.
Similarly, Riemannian quasi-Newton approaches also have the same time
cost upper bound for computing the update direction.
The PCG procedure we adopted also has such a finite-termination
guarantee with the same cost bound, and it comes additionally
with a guarantee that the objective value (the first form in
\cref{eq:newton}) converges to the optimum at the same R-linear rate
as the accelerated gradient method.
However, in practice we often observe that PCG converges much faster
than this worst-case convergence speed guarantee and can satisfy
\cref{eq:stop} with time cost significantly less than either applying
a (proximal) first-order method to \cref{eq:quadratic} or obtaining an
exact solution of \cref{eq:newton} by resorting to matrix
decompositions.

The parameter $S$ for thresholding the entry of the second stage
indicates our confidence in the currently identified manifold.
If we think the identification is reliable, we can set it to a lower
value so that we enter the manifold optimization stage earlier.
In general, we would like to set $S$ in an appropriate range so that the
algorithm does not spend too much time on a wrong manifold, and when
the right manifold is found, it does not stay in the first stage long
either.
In practice, more sophisticated strategies to adaptively change $S$
might lead to better efficiency.
For example, we can start with a small $S$ and restrict the running
time of the MO step to be proportional to $S$ steps of the first stage.
When unit step size is not accepted in an MO step, or when the
manifold changes in a proximal gradient step, indicating that the fast
local superlinear convergence is not yet achieved, we can switch back
to the first stage and increase $S$.
}
\fi

\begin{algorithm}[tb]
\DontPrintSemicolon
\SetKwInOut{Input}{input}\SetKwInOut{Output}{output}
\caption{Truncated semismooth Newton on Manifold}
\label{alg:newton}
\Input{$x^t \in \H$, $c > 0$, $\rho \in (0,1]$, $\underline \alpha,
\beta, \gamma \in (0,1)$, and a manifold $\M$}
Obtain parameterization $\phi_t$ for $\M$ and point $y^t$ with $\phi_t(y^t) = x^t$

$\alpha_t \leftarrow 1$, and compute $g^t$ in \cref{eq:newton}

Obtain by PCG an approximation solution $q^t$ to \cref{eq:newton}
satisfying \cref{eq:stop}

\lIf{$\inprod{q^t}{g^t} \geq 0$}
{Report MO step fails and exit}

\lWhile{$F(\phi_t(y^t + \alpha_t q^t)) > F(x^t)$ and $\alpha_t >
\underline \alpha$}
{$\alpha_t \leftarrow \beta \alpha_t$}

\lIf{$\alpha_t \leq \underline \alpha$}
{Report MO step fails}
\lElse{
	$x^{t+1} \leftarrow \phi_t(y^t + \alpha_t q^t)$}

\end{algorithm}

\modify{
\subsection{Global Convergence}
\label{subsec:analysis}
This section provides global convergence guarantees for \cref{alg:vm2}.
Because MO steps do not increase the objective value,
global convergence of \cref{alg:vm2} follows from the analysis in
\cite{LeeW18a} by treating \cref{eq:proxgrad1} as
solving \cref{eq:quadratic} with $H_t = \hat L I$ and noting that this
update always satisfies \cref{eq:armijo} with $\alpha_t = 1$.
For completeness, we still state these results, and provide a proof in
\cref{app:proof2}.

First, we restate a result in \cite{LeeW18a} to bound the number of
steps spent in the while-loop for enlarging $H_t$ in \cref{alg:vm2}.
\begin{lemma}[{\cite[Lemma~4]{LeeW18a}}]
\label{lemma:Hbound}
Given an initial choice $H_t^0$ for $H_t$ at the \modifyy{$\mathrm{t}$th} iteration of
\cref{alg:vm2} (so initially we start with $H_t = H_t^0$ and modify it
when \cref{eq:armijo} fails with $\alpha_t = 1$) and a parameter
$\beta \in (0,1)$\modifyy{. C}onsider the following two variants for enlarging
$H_t$, starting with $\sigma = 1$:
\begin{align}
	\sigma \leftarrow \beta \sigma, \quad
	H_t \leftarrow H_t^0 / \sigma,
	\label{eq:enlarge1}
	\tag{Variant 1}\\
	H_t \leftarrow H_t^0 + \sigma^{-1} I,\quad \sigma \leftarrow \beta
	\sigma\modifyy{.}
	\label{eq:enlarge2}
	\tag{Variant 2}
\end{align}
\modifyy{W}e then have the following bounds if every time the approximate
solution to \cref{eq:quadratic} always satisfies $Q_{H_t}(p^t;x^t)
\leq 0$.
\begin{enumerate}
\item If $H_t^0$ satisfies
	$M_0^t \succeq H_t^0 \succeq m_0^t$
for some $M_0^t \geq m_0^t > 0$, then the final $H_t$ from \cref{eq:enlarge1} satisfies
$\|H_t\| \leq M \max\left\{1, L / (\beta m)\right\}$,
and the while-loop terminates within $\lceil \log_{\beta^{-1}} L/m \rceil$ rounds.

\item If $H_t^0 \succeq 0$, then the final $H_t$ from
	\cref{eq:enlarge2} satisfies
$\norm{H_t} \leq M + \max\left\{1, L / \beta \right\}$
and the while-loops terminates within $1 + \lceil \log_{\beta^{-1}}{L} \rceil$ rounds.
\end{enumerate}
\end{lemma}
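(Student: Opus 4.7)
The plan is to first identify a clean sufficient condition under which \cref{eq:armijo} with $\alpha_t = 1$ is guaranteed to hold, and then to reduce each of the two enlargement rules to counting how many rounds are needed to force that condition. For the sufficient condition, starting from the descent lemma for $L$-smooth $f$, we have
\[
F(x^t + p^t) - F(x^t) \leq \inprod{\gt}{p^t} + \tfrac{L}{2}\norm{p^t}^2 + \Psi(x^t + p^t) - \Psi(x^t).
\]
The hypothesis $Q_{H_t}(p^t;x^t) \leq 0$ is, by the definition of $Q_{H_t}$, equivalent to $\inprod{\gt}{p^t} + \Psi(x^t + p^t) - \Psi(x^t) \leq -\tfrac{1}{2}\inprod{p^t}{H_t p^t}$. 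Substituting this into the Armijo inequality $F(x^t + p^t) - F(x^t) \leq \gamma Q_{H_t}(p^t;x^t)$ and simplifying reduces it to the purely algebraic requirement $\inprod{p^t}{H_t p^t} \geq L \norm{p^t}^2$, which is guaranteed once $H_t \succeq L I$. This scalar condition is the single quantity I would track throughout both variants.

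For Variant~1, after $k$ enlargements, $H_t = H_t^0/\beta^k$, whose smallest eigenvalue is at least $m_0^t/\beta^k \geq m/\beta^k$. Hence $H_t \succeq L I$ holds as soon as $\beta^{-k} \geq L/m$, giving termination within $\lceil \log_{\beta^{-1}}(L/m)\rceil$ rounds. To bound $\norm{H_t}$ at termination, I would invoke minimality: at the first such $k$ with $k \geq 1$, the previous round must have failed the sufficient condition, so $m_0^t/\beta^{k-1} < L$, yielding $\beta^{-k} < L/(\beta m_0^t) \leq L/(\beta m)$ and therefore $\norm{H_t} \leq M_0^t/\beta^k \leq M L/(\beta m)$; the $\max$ with $1$ handles the case $k = 0$ in which $H_t^0$ already satisfies the sufficient condition. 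Variant~2 is analogous: after $k$ enlargements, $H_t = H_t^0 + \beta^{-(k-1)} I \succeq \beta^{-(k-1)} I$ using $H_t^0 \succeq 0$, so the sufficient condition is forced once $\beta^{-(k-1)} \geq L$, giving at most $1 + \lceil \log_{\beta^{-1}} L\rceil$ rounds. The same minimality argument then gives $\beta^{-(k-1)} < L/\beta$, hence $\norm{H_t} \leq \norm{H_t^0} + L/\beta \leq M + L/\beta$, with the $\max$ with $1$ again absorbing the case $k = 1$.

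The main (and rather minor) obstacle is verifying that the assumption $Q_{H_t}(p^t;x^t) \leq 0$ actually persists after every re-solve triggered inside the while-loop, because without it the reduction to $L\norm{p^t}^2 \leq \inprod{p^t}{H_t p^t}$ fails and a single enlargement round is no longer enough to draw a conclusion. This, however, comes essentially for free from the algorithmic setup: $Q_{H_t}(0;x^t) = 0$ for every $H_t \in \SS$, so any linear-convergent, monotone subproblem solver $\mathcal{A}$ initialized at $p = 0$ returns an approximate solution with $Q_{H_t}(p^t;x^t) \leq 0$. Once this is noted, the bookkeeping above applies uniformly to every re-solve and yields exactly the two stated bounds.
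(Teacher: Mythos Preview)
The paper does not supply its own proof of this lemma; it is merely a restatement of \cite[Lemma~4]{LeeW18a}, so there is nothing in the present paper to compare your argument against. That said, your approach is exactly the standard one and is correct: the descent lemma plus the standing hypothesis $Q_{H_t}(p^t;x^t)\le 0$ reduce the Armijo check at $\alpha_t=1$ to the sufficient condition $H_t\succeq LI$, and then each variant is straightforward eigenvalue bookkeeping together with a minimality argument on the last failed round.

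Two small remarks. First, when you write that the Armijo inequality ``reduces to'' $\inprod{p^t}{H_t p^t}\ge L\norm{p^t}^2$, this is only a \emph{sufficient} condition, not an equivalence; the minimality step still goes through because failure of Armijo implies failure of the sufficient condition by contraposition, which is what you actually use. Second, in your Variant~1 norm bound you silently pass from $m_0^t$ to the global $m$ (and from $M_0^t$ to $M$); this matches the lemma's stated conclusion but relies on the implicit identification $m_0^t\ge m$, $M_0^t\le M$ that the lemma itself leaves notationally ambiguous. Neither point is a gap in your reasoning.
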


Now we provide global convergence guarantees for \cref{alg:vm2}
without the need of manifold identification.
From \cref{lemma:Hbound}, we can simply assume without loss of
generality that \cref{assum:Hbound} holds for the final $H_t$ that
leads to the final update direction that satisfies \cref{eq:armijo}.

\begin{theorem}
\label{thm:rate2}
Consider \cref{eq:f} with $f$ $L$-smooth for $L > 0$, $\Psi \in
\Gamma_0$, and $\Omega \neq \emptyset$.
Assume \cref{alg:vm2} is applied with an initial point $x^0$, the estimate
$\hat L$ satisfies $\hat L \geq L$,
\cref{assum:Hbound} holds for the final $H_t$ after exiting the
while-loop,
and \cref{eq:subprob} is satisfied with \cref{eq:multiplicative} for
some $\eta \in [0,1)$ fixed over $t$.
Let $\{k_t\}$ be the iterations that the MO step is not
attempted (so either \cref{eq:quadratic} is solved  approximately
or \cref{eq:proxgrad1} is conducted),
then we have $k_t \leq 2t$ for all $t$.
By denoting $\tilde M \coloneqq \max\left\{ \hat L, M \right\}, \quad
\tilde m \coloneqq \min\left\{ \hat L, m \right\}$,
we have the following convergence rate guarantees.
\begin{enumerate}
\item Let $G_t \coloneqq \argmin_p Q_I(p;x^t)$, then $\norm{G_{k_t}}
	\rightarrow 0$, and for all $t\geq 0$, we have
\[
	\min_{0 \leq i \leq t} \norm{G_{k_i}}^2 \leq \frac{F(x^0) -
	F^*}{\gamma (t+1)} \frac{\tilde M^2 \left( 1 + \tilde m^{-1} +
	\sqrt{1 - 2 \tilde M^{-1} + \tilde m^{-2}}\right)^2}{2(1 - \eta)
	\tilde m}.
\]
Moreover, $G_t = 0$ if and only if $0 \in \partial F(x^t)$, and
therefore any limit point of $\{x^{k_t}\}$ is a stationary point of
\cref{eq:f}.

\item
	If in addition $f$ is convex and there exists $R_0 \in [0,
		\infty)$ such that
\begin{equation}
\sup_{x: F\left( x \right) \leq F\left( x^0
	\right)}\left\|x - P_\Omega(x)\right\| = R_0
	\label{eq:R0}
\end{equation}
\modifyy{(}in other words, \cref{eq:sharpness} holds with $\theta = 0$, $\zeta =
R_0^{-1}$, and $\xi = F(x^0) - F^*$\modifyy{),} then:
\begin{enumerate}
	\item When $F(x^{k_t}) - F^* \geq \tilde M R_0^2$, we have
		\[
			F\left( x^{k_t+1} \right) - F^* \leq  \left( 1 - \frac{	\gamma(1
			- \eta)	}{2} \right) \left( F\left( x^{k_t} \right)
			- F^*\right).
		\]
	\item Let $t_0 \coloneqq \argmin_t \{t: F\left( x^{k_t} \right)
	- F^* < \tilde M R_0^2\}$, we have for all $t \geq t_0$ that
	\[
		F\left( x^{k_t} \right) - F^* \leq \frac{2 \tilde M R_0^2}{
		\gamma \left( 1 - \eta \right) \left( t - t_0 \right) + 2}.
	\]
	Moreover, we have
	\[
		t_0 \leq \max \left\{ 0, 1 + \frac{2}{\gamma \left( 1 - \eta
		\right)} \log \frac{F\left( x^0 \right) - F^*}{\tilde M
		R_0^2}\right\}.
	\]
\end{enumerate}
In summary, we have $F(x^t) - F^* = O\left( t^{-1} \right)$.
\item The results of \cref{thm:rate} hold, with $M$ and $\norm{H_t}$
	replaced by $\tilde M$, $\delta_t$ by $\delta_{k_t}$, $\alpha_t$
	and $\bar \alpha$ by $1$, $\delta_t$ by $\delta_{k_t}$,
	$\delta_{t+1}$ by $\delta_{k_t+1}$, and $\delta_{t_0}$ by
	$\delta_{k_{t_0}}$.
\end{enumerate}
\end{theorem}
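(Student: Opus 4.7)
The plan is to reduce every case to an invocation of the analysis already developed for \cref{alg:vm}, by treating the MO iterations as ``free'' descent steps and unifying the non-MO iterations. The PG step \cref{eq:proxgrad1} is precisely the exact minimizer of $Q_{\hat L I}(\cdot; x^t)$, so by the standard descent lemma (using $\hat L \geq L$) it automatically satisfies \cref{eq:armijo} with $\alpha_t = 1$ and trivially fulfills \cref{eq:subprob} with \cref{eq:multiplicative}. In the first stage, the trust-region-like while-loop terminates by \cref{lemma:Hbound} with an $H_t$ for which \cref{eq:armijo} at unit step holds. Together with \cref{assum:Hbound}, this yields the uniform bound $\tilde m I \preceq H_{k_t} \preceq \tilde M I$ on the effective operators at all non-MO iterations. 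Finally, $k_t \leq 2t$ follows from inspecting the control flow: an MO attempt is made only when $\text{SmoothStep}=0$; a successful attempt sets it to $1$, forcing the next iteration to be the PG step \cref{eq:proxgrad1}; a failed attempt resets $\text{Unchanged}$ and returns to the first stage. Thus no two MO attempts occur consecutively, so at least half of all outer iterations are non-MO.

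For item 1, \cref{eq:armijo} combined with \cref{eq:multiplicative} gives $F(x^{k_t}) - F(x^{k_t+1}) \geq -\gamma(1-\eta) Q^*_{k_t}$, while MO steps contribute nonnegative descent by their acceptance rule. The remaining task is the standard proximal-calculus bound $-Q^*_{k_t} \geq c(\tilde m, \tilde M) \|G_{k_t}\|^2$, obtained by comparing the two optimization problems defining $G_{k_t}$ (with operator $I$) and $p^{k_t*}$ (with operator $H_{k_t}$) and inserting the uniform eigenvalue bounds together with the PG inequality \cref{eq:PG}; the resulting constant reproduces the one claimed in the theorem. Telescoping $F(x^0) - F^* \geq \sum_{i=0}^{t}[F(x^{k_i}) - F(x^{k_i+1})]$ delivers the $\min_{0 \leq i \leq t}\|G_{k_i}\|^2$ rate, and the equivalence $G_t = 0 \Leftrightarrow 0 \in \partial F(x^t)$ is the fixed-point characterization of the proximal operator, giving stationarity of limit points of $\{x^{k_t}\}$.

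For item 2, under convexity of $f$, plugging the feasible direction $p = P_\Omega(x^{k_t}) - x^{k_t}$ into $Q_{H_{k_t}}$ and using convexity of $f$ and $\Psi$ together with \cref{eq:R0} yields $Q^*_{k_t} \leq -(F(x^{k_t}) - F^*) + (\tilde M/2) R_0^2$; combining this with \cref{eq:armijo} at unit step and \cref{eq:multiplicative} produces the two-phase (initially geometric, then $O(1/t)$) rate by the argument of \cite{LeeW18a}. Item 3 is then a direct application of \cref{thm:rate}, in which every appearance of $M$ or $\|H_t\|$ is replaced by $\tilde M$ and the outer index is shifted to $k_t$ to skip the MO-attempt iterations. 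The main obstacle is merely the bookkeeping to confirm that both the PG substitute and the approximate SQA solve admit the same descent inequality with the common constants $\tilde m$ and $\tilde M$; once this unification is in place, the quantitative rates follow by verbatim application of the corresponding arguments already proven for \cref{alg:vm}.
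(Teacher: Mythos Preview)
Your proposal is correct and follows essentially the same route as the paper: interpret the PG step \cref{eq:proxgrad1} as an exact solve of \cref{eq:quadratic} with $H_t=\hat L I$ (so \cref{eq:armijo} holds at unit step and \cref{eq:subprob}--\cref{eq:multiplicative} are trivially met), record the uniform bound $\tilde m I \preceq H_{k_t} \preceq \tilde M I$ on non-MO iterations, use monotonicity of MO steps to get $\delta_{k_{t+1}} \leq \delta_{k_t+1}$, and then invoke the existing results for \cref{alg:vm}. The only cosmetic differences are that the paper obtains the constant in item~1 by citing \cite[Lemma~7]{LeeW18a} and \cite[Lemma~3]{TseY09a} rather than \cref{eq:PG} (the latter concerns $G^\tau_{H_t}$, not the comparison between the $I$-problem and the $H_{k_t}$-problem, so your pointer is slightly off even though your verbal description of the comparison is right), and that for item~2 the paper, like \cite{LeeW18a}, uses the $\lambda$-scaled direction $\lambda(P_\Omega(x^{k_t})-x^{k_t})$ with $\lambda\in[0,1]$ rather than only $\lambda=1$, which is what actually produces the sublinear phase; your deferral to ``the argument of \cite{LeeW18a}'' covers this.
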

}

\subsection{Superlinear and Quadratic Convergence}
\label{subsec:super}
\modify{
Following the argument in the previous subsection to treat
\cref{eq:proxgrad1} as solving \cref{eq:quadratic} exactly,
the manifold identification property of \cref{eq:proxgrad1} also
follows from \cref{thm:identify2}.}
We thus focus on its local convergence in this subsection.
In what follows, we will show that $x^t$ converges to a stationary point
$x^*$ satisfying \cref{eq:nod} superlinearly or even quadratically in
the second stage.

Let $\M$ be the active manifold of $x^*$ and $\phi$ be a
parameterization of $\M$ with $\phi(y^*) = x^*$ for some point $y^*$.
We can thus assume without loss of generality $\phi_t = \phi$ for all
$t$ that identified $\M$.
We denote $F_{\phi} (y) \coloneqq F(\phi(y))$.
\modifyy{For simplicity, we assume that $F_{\phi}$ is twice-differentiable with
its Hessian locally Lipschitz continuous around $y^*$.
In particular, we just need the following property to hold locally in
a neighborhood $U_0$ of $y^*$}:
\begin{equation}
\nabla F_\phi(y_1) - \nabla F_\phi(y_2) - \nabla^2
	F_\phi(y_2) = O\left( \|y_1 - y_2\|^2 \right),
	\quad \forall y_1, y_2 \in U_0.
\label{eq:semismoothp}
\end{equation}
We do not assume $\nabla^2 F_\phi(y^*) \succ 0$ like
existing analyses for Newton's method, but consider a
degenerate case in which there is a neighborhood $U_1$ of $y^*$ such
that
\begin{equation}
	\nabla^2 F_\phi(y) \modify{\succeq} 0, %
	\quad \forall y \in U_1.
\label{eq:psd}
\end{equation}
Note that \cref{eq:psd} implies that $F_\phi$ is convex within
$U_1$.
We can decompose $\H$ into the direct sum of the tangent and the
normal spaces of $\M$ at $x^*$, and thus its stationarity implies
$\nabla F_{\phi}(y^*) = 0$.
This and \cref{eq:psd} mean $y^*$ is a local optimum of $F_{\phi}$,
and hence $x^*$ is a local minimum of $F$ when $f$ is $L$-smooth,
following the argument of \cite[Theorem~2.5]{Wri12a}.
We also assume that $F_\phi$ satisfies a sharpness condition similar
to \cref{eq:sharpness} in a neighborhood $U_2$ of $y^*$:
\begin{equation}
\zeta^{\hat \theta} \norm{y-y^*} \leq \left(F_\phi(y) - F\left( y^*
\right)\right)^{\hat\theta},\quad \forall y \in U_2,
\label{eq:sharpness2}
\end{equation}
for some $\zeta > 0$ and $\modify{\hat \theta} \in
(0,1/2]$.\footnote{$\modify{\hat
\theta} > 1/2$ cannot happen unless $F_{\phi}$ is a constant in
$U_2$.}
By shrinking the neighborhoods if necessary, we assume without
loss of generality that $\modifyy{U_0 =} U_1 = U_2$ and denote it by $U$.
\modifyy{Note that the conventional assumption of %
positive-definite Hessian at $y^*$ is a special case that satisfies
\cref{eq:psd} and \cref{eq:sharpness2} with $\hat \theta = 1/2$.}

We define $d_t \coloneqq \norm{y^t - y^*}$ and use it to bound
$y^t + q^t$ and $\nabla F_\phi(y^t + q^t)$.
\begin{lemma}
Consider a \modify{stationary} point $x^*$ of \cref{eq:f} with $\Psi$ partly
smooth at it relative to a manifold $\M$ with a parameterization
$\phi$ and a point $y^*$ such that $\phi(y^*) = x^*$,
\modifyy{and assume that within a neighborhood $U$ of $y^*$, $F_{\psi}$ is
twice-differentiable with \cref{eq:psd,eq:semismoothp} hold.}
Then $y^t \in U$ implies that any $q^t$ satisfying \cref{eq:stop}
is bounded by
\begin{align}
\norm{q^t}
\leq  2 d_t + \mu_t^{-1} O\left( d_t^{2} \right) + 0.1 \mu_t^{-1}
	\norm{g^t}^{1 + \rho}.
\label{eq:q}
\end{align}
\label{lemma:qt}
\end{lemma}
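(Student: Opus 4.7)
The plan is to split the truncated Newton direction into an exact-Newton part and a residual part, and bound each using the convex/semismooth structure. Define the residual $e^t \coloneqq H_t q^t + g^t$, so by \cref{eq:stop} we have $\|e^t\| \leq 0.1 \|g^t\|^{1+\rho}$, and write $q^t = -H_t^{-1}(g^t + e^t)$. Invertibility of $H_t$ is automatic here: \cref{eq:psd} gives $\nabla^2 F_\phi(y^t) \succeq 0$ when $y^t \in U$, so $H_t = \nabla^2 F_\phi(y^t) + \mu_t I \succeq \mu_t I$, which also yields the operator bounds $\|H_t^{-1}\| \leq \mu_t^{-1}$ and $\|\mu_t H_t^{-1}\| \leq 1$. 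The residual part is then immediately controlled: $\|H_t^{-1} e^t\| \leq 0.1 \mu_t^{-1} \|g^t\|^{1+\rho}$.

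For the principal part, I would use stationarity of $x^*$, which, as noted after \cref{eq:psd}, gives $\nabla F_\phi(y^*) = 0$, together with strong semismoothness of $\nabla F_\phi$ at $y^*$. Applying \cref{eq:semismoothp} with the generalized Hessian element $\nabla^2 F_\phi(y^t)$ that enters $H_t$ yields
\[
g^t = \nabla F_\phi(y^t) - \nabla F_\phi(y^*) = \nabla^2 F_\phi(y^t)(y^t - y^*) + O(d_t^2) = (H_t - \mu_t I)(y^t - y^*) + O(d_t^2).
\]
Solving for $H_t^{-1} g^t$ gives $H_t^{-1} g^t = (y^t - y^*) - \mu_t H_t^{-1}(y^t - y^*) + H_t^{-1} O(d_t^2)$. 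Taking norms and using $\|\mu_t H_t^{-1}\| \leq 1$ together with $\|H_t^{-1}\| \leq \mu_t^{-1}$ produces $\|H_t^{-1} g^t\| \leq d_t + d_t + \mu_t^{-1} O(d_t^2) = 2 d_t + \mu_t^{-1} O(d_t^2)$.

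Combining the two bounds via the triangle inequality delivers \cref{eq:q}. There is no substantial obstacle beyond bookkeeping: the factor $2 d_t$, rather than the $d_t$ one might naively expect for an exact Newton step, arises because the Tikhonov-style shift $\mu_t I$ in $H_t$ perturbs the Newton step by the term $-\mu_t H_t^{-1}(y^t - y^*)$, which contributes an additional $d_t$ via the bound $\|\mu_t H_t^{-1}\| \leq 1$. The only point requiring a small remark is that \cref{eq:semismoothp} must apply to the specific generalized Hessian element entering $H_t$; this is fine because strong semismoothness is stated uniformly over all elements of $\partial(\nabla F_\phi(y^t))$, and we are free to choose the one used in the algorithm.
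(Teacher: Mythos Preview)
Your argument is correct and is essentially the paper's own proof, with only a cosmetic rearrangement: the paper first bounds $\|y^t + q^t - y^*\|$ by writing $y^t + q^t - y^* = H_t^{-1}(\psi_t - g^t + H_t(y^t - y^*))$ and then applies $\|q^t\| \leq d_t + \|y^t + q^t - y^*\|$, whereas you split $q^t$ directly into $-H_t^{-1} g^t$ and $H_t^{-1} e^t$ and bound each piece with the same ingredients (\cref{eq:psd} for $H_t \succeq \mu_t$, \cref{eq:semismoothp} for the $O(d_t^2)$ term, and \cref{eq:stop} for the residual). Note a harmless sign slip: from $e^t = H_t q^t + g^t$ one gets $q^t = H_t^{-1}(e^t - g^t)$, not $-H_t^{-1}(g^t + e^t)$, but this does not affect any of the norm estimates.
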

\begin{proof}
From \cref{eq:stop}, we can find $\psi_t \in \H$ such that
\begin{equation}
H_t q^t + g^t = \psi_t, \quad \norm{\psi_t} \leq 0.1
\norm{g^t}^{1 + \rho}.
\label{eq:error}
\end{equation}
From \cref{eq:psd} and \cref{eq:newton}, we have
\begin{equation}
H_t \succeq \mu_t \succ 0,
\label{eq:Hlower}
\end{equation}
so $H_t$ is invertible.
We then get
\begin{align}
\nonumber
\norm{y^t + q^t - y^*}
\stackrel{\cref{eq:error}}{=}&~ \norm{H_t^{-1} \left( \psi_t - g^t +
	H_t\left( y^t - y^* \right) \right)}\\
\nonumber
\stackrel{\cref{eq:newton}}{\leq} &~ \norm{H_t^{-1}} \left(\norm{\psi_t} +
	\norm{g^t - \nabla^2 F\left( \phi_t\left( y^t \right)
	\right) d_t } + \mu_t d_t\right)\\
\stackrel{\cref{eq:Hlower},\cref{eq:error},\cref{eq:semismoothp}}{\leq}&~ 0.1 \mu_t^{-1}
	\norm{g^t}^{1 + \rho} + \mu_t^{-1} O\left( d_t^{2} \right) + d_t.
\label{eq:bd1}
\end{align}
From the triangle inequality, we have
$\norm{q^t} \leq \norm{y^t - y^*} + \norm{y^t + q^t - y^*}$,
whose combination with \cref{eq:bd1} proves \cref{eq:q}.
\qed
\end{proof}
\begin{lemma}
Consider the setting of \cref{lemma:qt} and further assume that
$\Psi \in \Gamma_0$ and $f$ is $L$-smooth.
The following hold.
\begin{enumerate}
\item If $\rho \in (0,1]$ and $F_\phi$ satisfies
	\cref{eq:sharpness2} with $\hat \theta = 1/2$ for some $\zeta > 0$,
	then
\begin{equation}
	\norm{ y^t + q^t - y^*} = O\left( d_t^{1+\rho}
\right), \quad
\norm{\nabla F_\phi (y^t + q^t)} = O\left( \norm{g^t}^{1+\rho} \right).
\label{eq:quadconv}
\end{equation}
\item If $\rho = 0.69$ and $F_\phi$ satisfies
	\cref{eq:sharpness2} for some $\zeta > 0$ and $\hat \theta \geq 3/8$,
	then
\begin{equation}
	\norm{ y^t + q^t - y^*} = o\left( d_t \right),
\quad
\norm{\nabla F_\phi (y^t + q^t)} = o\left( \norm{g^t} \right).
\label{eq:superlinear}
\end{equation}
\end{enumerate}
\label{lemma:yt}
\end{lemma}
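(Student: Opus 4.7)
The plan is to start from a single algebraic identity for $y^t+q^t-y^*$, substitute the semismoothness expansion of $g^t$, and then control the resulting three terms using the structure of $H_t$ together with the error bound that convexity extracts from \cref{eq:sharpness2}. The two cases differ only in how we bound $\norm{H_t^{-1}}$.

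First, the master inequality. Reprising the manipulations from the proof of \cref{lemma:qt}, writing $H_t q^t = \psi_t - g^t$ with $\norm{\psi_t}\leq 0.1\norm{g^t}^{1+\rho}$ and using $\nabla F_\phi(y^*)=0$ together with strong semismoothness $g^t=\nabla^2 F_\phi(y^t)(y^t-y^*)+O(d_t^2)$, I obtain
\[
H_t(y^t+q^t-y^*)=\mu_t(y^t-y^*)+\bigl(\nabla^2 F_\phi(y^t)(y^t-y^*)-g^t\bigr)+\psi_t,
\]
hence
\[
\norm{y^t+q^t-y^*}\leq \norm{H_t^{-1}}\Bigl(\mu_t d_t+O(d_t^{2})+0.1\,\norm{g^t}^{1+\rho}\Bigr). \qquad(\ast)
\]
Convexity together with \cref{eq:sharpness2} gives the error bound
$\norm{g^t}\,d_t\geq F_\phi(y^t)-F_\phi(y^*)\geq \zeta\, d_t^{1/\hat\theta}$, so
$\norm{g^t}\geq \zeta d_t^{1/\hat\theta-1}$, while Lipschitzness of $\nabla F_\phi$ yields the matching $\norm{g^t}\leq L_\phi d_t$. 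These two inequalities let me convert freely between $d_t$ and $\norm{g^t}$, and they pin down the magnitude of $\mu_t=c\norm{g^t}^\rho$.

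For Case~1 ($\hat\theta=1/2$, $\rho\in(0,1]$), sharpness is precisely quadratic growth. Combined with \cref{eq:psd}, the strong-semismoothness expansion of $\nabla F_\phi$ at $y^*$ forces every limiting Hessian at $y^*$ to satisfy $\nabla^2 F_\phi(y^*)\succeq 2\zeta I$; shrinking $U$ if necessary I obtain a uniform bound $\nabla^2 F_\phi(y)\succeq \lambda I$ for some $\lambda>0$ throughout $U$, so $\norm{H_t^{-1}}\leq \lambda^{-1}$. The error bound becomes $d_t=O(\norm{g^t})$, so $\mu_t d_t=O(d_t^{1+\rho})$, $O(d_t^2)=O(d_t^{1+\rho})$ (since $\rho\leq 1$ and $d_t\to 0$), and $\norm{g^t}^{1+\rho}=O(d_t^{1+\rho})$. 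Substituting into $(\ast)$ yields the first identity in \cref{eq:quadconv}. The second follows from $\norm{\nabla F_\phi(y^t+q^t)}=\norm{\nabla F_\phi(y^t+q^t)-\nabla F_\phi(y^*)}\leq L_\phi\norm{y^t+q^t-y^*}=O(d_t^{1+\rho})=O(\norm{g^t}^{1+\rho})$, where the last equality reuses $d_t=O(\norm{g^t})$.

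For Case~2 ($\hat\theta\geq 3/8$, $\rho=0.69$), I only have the generic $\norm{H_t^{-1}}\leq \mu_t^{-1}$ since quadratic growth may fail. The error bound now reads $d_t=O(\norm{g^t}^{\hat\theta/(1-\hat\theta)})$, giving at worst $d_t=O(\norm{g^t}^{3/5})$ when $\hat\theta=3/8$. Rewriting each term in $(\ast)$ in terms of $\norm{g^t}$ via the two-sided bounds on $\norm{g^t}$ and $d_t$, the specific value $\rho=0.69$ can be checked to make the resulting exponents strictly exceed those of $\norm{g^t}$ and $d_t$, delivering $\norm{y^t+q^t-y^*}=o(d_t)$. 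The estimate on $\norm{\nabla F_\phi(y^t+q^t)}$ is then immediate by Lipschitzness, exactly as in Case~1.

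The hard part, in my view, is Case~2: the bound $\norm{H_t^{-1}\mu_t(y^t-y^*)}\leq d_t$ is tight in null-space directions of $\nabla^2 F_\phi(y^t)$, so the naive use of $\norm{H_t^{-1}}\leq \mu_t^{-1}$ leaves an irreducible $d_t$ term. Overcoming it requires an eigendecomposition of $\nabla^2 F_\phi(y^t)$ in which the null component of $y^t-y^*$ is controlled through the $O(d_t^{2})$ semismoothness residual (since that residual absorbs all null-direction content of $g^t-\nabla^2 F_\phi(y^t)(y^t-y^*)$). The specific threshold $\hat\theta\geq 3/8$ and exponent $\rho=0.69$ are presumably calibrated so that, after this projection argument, every residual exponent in $d_t$ exceeds $1$.
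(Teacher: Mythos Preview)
Your master inequality $(\ast)$ is exactly the bound \cref{eq:bd1} already obtained in the proof of \cref{lemma:qt}, and your error-bound inequality $\norm{g^t}\geq \zeta d_t^{1/\hat\theta-1}$ matches the paper's \cref{eq:gradbound}. Where you diverge from the paper is in how you exploit these, and in Case~2 this leaves a genuine gap.

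In Case~2 you correctly observe that $(\ast)$ with $\norm{H_t^{-1}}\leq\mu_t^{-1}$ yields only $\norm{y^t+q^t-y^*}\leq d_t+O(d_t^{0.85})$, because the term $\mu_t^{-1}\cdot\mu_t d_t=d_t$ is irreducible; your proposed eigendecomposition repair is not carried out and it is unclear how it would work. The paper sidesteps this entirely by reversing the order: it bounds the \emph{gradient} at the new point first. Using strong semismoothness centred at $y^t+q^t$ (this is where ``strongly semismooth in $U$'' rather than just at $y^*$ is needed) and $H_tq^t=\psi_t-g^t$,
\[
\nabla F_\phi(y^t+q^t)=g^t+\nabla^2 F_\phi(y^t)q^t+O(\norm{q^t}^2)=\psi_t-\mu_t q^t+O(\norm{q^t}^2),
\]
so $\norm{\nabla F_\phi(y^t+q^t)}\leq O(\norm{q^t}^2)+\mu_t\norm{q^t}+0.1\norm{g^t}^{1+\rho}$, with no $H_t^{-1}$ appearing. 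Substituting the \emph{full} \cref{lemma:qt} bound for $\norm{q^t}$ into the middle term (rather than the cruder $\norm{q^t}=O(d_t^{0.85})$, which would only give $\mu_t\norm{q^t}=O(d_t^{1.54})$) yields $\mu_t\norm{q^t}\leq 2\mu_t d_t+O(d_t^2)+0.1\norm{g^t}^{1+\rho}=O(d_t^{1.69})$, hence $\norm{\nabla F_\phi(y^t+q^t)}=O(d_t^{1.69})$. Only then is \cref{eq:gradbound} used in reverse to get $\norm{y^t+q^t-y^*}=O(\norm{\nabla F_\phi(y^t+q^t)}^{\hat\theta/(1-\hat\theta)})=O(d_t^{1.69\cdot 0.6})=O(d_t^{1.014})=o(d_t)$. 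The specific choices $\rho=0.69$, $\hat\theta\geq 3/8$ are calibrated precisely so that $1.69\cdot\tfrac{\hat\theta}{1-\hat\theta}>1$.

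Your Case~1 also differs from the paper, and the claim that quadratic growth forces the generalized Hessian to be uniformly positive definite near $y^*$ is not justified: the semismoothness expansion $g^t=\nabla^2 F_\phi(y^t)(y^t-y^*)+O(d_t^2)$ together with $\langle g^t,y^t-y^*\rangle\geq\zeta d_t^2$ controls $\nabla^2 F_\phi(y^t)$ only in the direction $y^t-y^*$, not in transverse directions, so the step ``every limiting Hessian at $y^*$ satisfies $\nabla^2 F_\phi(y^*)\succeq 2\zeta I$'' does not follow. The paper avoids this by using the same gradient-first route as above, which requires only $H_t\succeq\mu_t$ from \cref{eq:psd}; with $\hat\theta=1/2$ one has $g^t=\Theta(d_t)$, so $\norm{q^t}=O(d_t)$ and the gradient bound gives $\norm{\nabla F_\phi(y^t+q^t)}=O(\norm{g^t}^{1+\rho})$, whence $\norm{y^t+q^t-y^*}\leq\zeta^{-1}\norm{\nabla F_\phi(y^t+q^t)}=O(d_t^{1+\rho})$.
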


\begin{proof}
From the assumptions on $\Psi$, $\phi$, and $f$, $F_\phi$
is twice-differentiable almost everywhere,
and within any compact set $K$ containing $y^*$, any
$\nabla^2 F_\phi \in \partial \nabla F_\phi$ is upper-bounded by some $L_K > 0$
($f(\phi(y))$ is \modify{differentiable with the gradient Lipschitz-continuous}, and $\nabla^2
(\Psi(
\phi(y))$ is upper-bounded), so $ F_\phi$ is $L_K$-smooth within $K$.
Since $K$ is arbitrary, we let $K \supset U$ and obtain
\begin{equation}
\norm{g^t} \leq L_K d_t.
\label{eq:gt}
\end{equation}
Since \cref{eq:psd} implies that $F_\phi$ is convex in $U$,
\cref{eq:sharpness2} leads to
\begin{equation}
	\norm{\nabla F_{\phi} \left( y \right)} \geq \zeta \norm{y - y^*}
	^{(1 - \hat \theta)/\hat \theta}, \quad
	\forall y \in U.
\label{eq:gradbound}
\end{equation}

For the first case, \cref{eq:gt}-\cref{eq:gradbound} show $g^t =
\Theta(d_t)$, so \cref{lemma:qt} implies
\begin{equation}
\norm{q^t} = O(d_t) = O(g^t).
\label{eq:qtbound}
\end{equation}
Thus, by the triangle inequality, \cref{eq:quadconv} is proven by
\begin{align}
\nonumber
\norm{\nabla F_\phi\left(y^t + q^t\right)}
= &~\norm{\nabla F_\phi\left(y^t + q^t\right) + \psi_t - \psi_t}\\
\nonumber
\stackrel{\cref{eq:error}}{\leq} &~ \norm{\nabla F_\phi\left(y^t +
	q^t\right) - g^t - H_t q^t} + 0.1 \norm{g^t}^{1+\rho} \\
\stackrel{\cref{eq:semismoothp},\cref{eq:newton}}{\leq} &~ O\left(
	\norm{q^t}^2 \right) + \mu_t \norm{q^t} + 0.1 \norm{g^t}^{1+\rho}
\stackrel{\cref{eq:qtbound},\cref{eq:newton}}{=}  O\left(
\norm{g^t}^{1+\rho} \right).
\label{eq:yq}
\end{align}

In the second case,
$\hat \theta/(1 - \hat \theta) \geq 3/5$, so
\cref{eq:newton}, \cref{lemma:qt} and \cref{eq:gt}-\cref{eq:gradbound} imply
\begin{equation}
\norm{q^t} = O\left(d_t^{0.85}\right).
\label{eq:qbound}
\end{equation}
We then get from $\rho = 0.69$ that
\begin{align}
\nonumber
\norm{\nabla F_\phi\left(y^t + q^t\right)}
\stackrel{\cref{eq:yq}}{\leq}
&~ O\left(
	\norm{q^t}^2 \right) + \mu_t \norm{q^t} + 0.1 \norm{g^t}^{1.69}\\
\stackrel{\cref{eq:gt},\cref{eq:newton},\cref{eq:qbound}}{=} &~
	O\left( d_t^{1.7} \right) + 2\mu d_t + O(d_t^2) + O\left(
	d_t^{1.69}\right)
= O(d_t^{1.69}).
\label{eq:gbound}
\end{align}
From \cref{eq:gradbound} we get that
\begin{equation*}
\norm{y^t + q^t - y^*} = O\left(
\norm{\nabla F_{\phi} \left( y^t + q^t \right)}^{0.6}\right)
\stackrel{\cref{eq:gbound}}{\leq} O\left( (d_t^{1.69})^{0.6} \right) =
O\left( d_t^{1.014} \right),
\end{equation*}
proving the first equation in \cref{eq:superlinear}.
The second one is then proven by
\begin{equation*}
	\norm{\nabla F_{\phi} \left( y^t + q^t \right)}
\stackrel{\cref{eq:gbound}}{\leq} O\left( (d_t^{1.69})\right)
\stackrel{\cref{eq:gradbound}}{=} O\left( \norm{g^t}^{1.014} \right).
\tag*{\qed}
\end{equation*}
\end{proof}

Now we are able to show two-step superlinear convergence of $\norm{x -
x^*}$.
\begin{theorem}
Consider the setting of \cref{lemma:yt} and assume in addition that
$x^*$ satisfies \cref{eq:nod}.
Then there is a neighborhood $V$ of $x^*$ such that if at the $t_0$th
iteration of \cref{alg:vm2} for some $t_0 > 0$ we have that $x^{t_0} \in V$,
Unchanged $\geq S$, $\M$ is correctly identified with
parameterization $\phi$ and $\phi(y^*) = x^*$, and $\alpha_t =
1$ is taken in \cref{alg:newton} for all $t \geq t_0$, we get the
following for all $t \ge t_0$.
\begin{enumerate}
\item For $\rho \in (0,1]$ and $F_\phi$ satisfying
	\cref{eq:sharpness2} with $\hat \theta = 1/2$ for some $\zeta > 0$:
\begin{equation}
	\norm{x^{t +2} - x^*} = O\left(\norm{x^{t}-x^*}^{1+\rho}\right),
	\norm{\nabla F_\phi\left(\modifyy{y}^{t +2}\right) } = O\left(\norm{\nabla
		F_\phi\left(\modifyy{y}^{t}\right)}^{1+\rho}\right).
	\label{eq:quad1}
\end{equation}
\item For $\rho = 0.69$ and $F_\phi$ satisfying
	\cref{eq:sharpness2} for some $\zeta > 0$ and $\hat \theta \geq 3/8$,
\begin{equation*}
\norm{x^{t +2} - x^*} = o\left(\norm{x^{t}-x^*}\right).
\end{equation*}
\end{enumerate}
\label{thm:super}
\end{theorem}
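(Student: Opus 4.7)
The plan is to exploit the alternating MO-PG structure enforced in the second stage of \cref{alg:vm2}: after an MO step with $\alpha_t=1$ the flag SmoothStep becomes $1$, so the next iteration executes the PG update \cref{eq:proxgrad1}, which resets it. Since \cref{lemma:yt} already delivers the per-MO-step reduction in $y$-space, what remains is to (i) transfer those bounds to $x$-space through the local $\mathcal{C}^2$-diffeomorphism $\phi$, (ii) bound the intervening PG step by a constant Lipschitz factor, and (iii) verify that the alternation is never interrupted. I would shrink $V$ so that inside it $\phi$ is bi-Lipschitz with some constant $\kappa$, the hypotheses of \cref{lemma:yt} and \cref{thm:identify2} are in force, and the premises $|Q_t^*|\le\delta$ etc.\ of the identification theorems persist along the iterates generated.

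At an MO iteration $t$, $x^{t+1}=\phi(y^t+q^t)\in\M$, and bi-Lipschitzness of $\phi$ gives $\norm{x^{t+1}-x^*}\le\kappa\norm{y^t+q^t-y^*}$ and $d_t\le\kappa\norm{x^t-x^*}$; combining with \cref{lemma:yt} yields the one-step rate $\norm{x^{t+1}-x^*}=O(\norm{x^t-x^*}^{1+\rho})$ in the quadratic case, respectively $o(\norm{x^t-x^*})$ in the superlinear case.

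For the subsequent PG iteration, the operator $T(x)\coloneqq\prox^{\hat L I}_\Psi(x-\hat L^{-1}\nabla f(x))$ has $x^*$ as a fixed point, because $0\in\partial F(x^*)$ forces $x^*-\hat L^{-1}\nabla f(x^*)\in x^*+\hat L^{-1}\partial\Psi(x^*)$, and is $(1+L/\hat L)\le 2$ Lipschitz as the composition of the nonexpansive prox with the $(1+L/\hat L)$-Lipschitz forward step (using $\hat L\ge L$). Hence $\norm{x^{t+2}-x^*}\le 2\norm{x^{t+1}-x^*}$, which composed with the MO bound gives the two-step claim in \cref{eq:quad1}, and likewise the superlinear claim. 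Moreover, \cref{thm:identify2} applied to \cref{eq:proxgrad1} --- viewed as an exact solve of \cref{eq:quadratic} with $H_t=\hat L I$ and $\alpha_t=1$ --- places $x^{t+2}$ back on $\M$ whenever $x^{t+1}\in V$, so $\M$ and the Unchanged counter are preserved and the MO-PG alternation continues for all $t\ge t_0$; a parity check then shows the two-step bound holds regardless of whether $t$ indexes an MO or a PG iterate.

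The gradient bound in \cref{eq:quad1} follows by letting $y^{t+2}\coloneqq\phi^{-1}(x^{t+2})$, using local Lipschitzness of $\nabla F_\phi$ to get $\norm{\nabla F_\phi(y^{t+2})}=O(d_{t+2})$, and invoking $\norm{g^t}=\Theta(d_t)$, which under $\hat\theta=1/2$ is immediate from \cref{eq:gt} (upper) and \cref{eq:gradbound} (lower). The main obstacle is uniform neighborhood calibration: $V$ must be chosen once so that every iterate produced by the MO-PG recursion stays inside the region where \cref{thm:identify2} triggers, \cref{lemma:yt} applies, and the $O(\cdot)$/$o(\cdot)$ constants compose across iterations; once this is arranged, the recursion above delivers the stated two-step superlinear/quadratic rates.
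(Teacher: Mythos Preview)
Your proposal is correct and follows essentially the same route as the paper: transfer \cref{lemma:yt} to $x$-space via the local bi-Lipschitz property of $\phi$ (the paper's \cref{eq:equiv}), control the intervening PG step by a constant factor, invoke \cref{thm:identify2} to land back on $\M$, and calibrate nested neighborhoods so the recursion persists. The one genuine difference is your treatment of the PG step: you bound it directly by observing that $x\mapsto\prox^{\hat L I}_\Psi(x-\hat L^{-1}\nabla f(x))$ fixes $x^*$ and is $(1+L/\hat L)\le 2$ Lipschitz, whereas the paper appeals to \cite[Lemma~3.2]{Wri12a} for the estimate $\norm{x^{t+1}-x^*}=O(\norm{x^t-x^*})$. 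Your argument is more self-contained and slightly sharper; the paper's citation buys generality (it works for any bounded $H_t$, not just $\hat L I$), but that generality is not needed here.
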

\begin{proof}
In our discussion below, $V_i$ and $U_i$ for $i \in \N$ are
respectively neighborhoods of $x^*$ and $y^*$.

Since $\phi$ is $\mathcal{C}^2$, there is $U_1$ of
$y^*$ such that
\begin{equation}
\phi\left( y \right) - \phi\left( y^* \right) =
\inprod{\nabla \phi(y^*)}{y-y^*} + O\left( \norm{y - y^*}^2 \right),
\forall y \in U_1.
\label{eq:taylor}
\end{equation}
Because the derivative of $\phi$ at $y^*$ is injective,
\cref{eq:taylor} implies
\begin{equation}
\norm{\phi\left( y \right) - \phi\left( y^* \right)} = \Theta\left(
	\norm{y-y^*} \right), \forall y \in U_1.
\label{eq:equiv}
\end{equation}

If the $t$th iteration is a TSSN step, we define $y^t$ to be the point
such that $\phi(y^t) = x^t$.
If either case in \cref{lemma:yt} holds and $q^t$ satisfies
\cref{eq:stop}, from that $\norm{y^t + q^t - y^*} = o(\norm{y^t -
y^*})$ we can find $U_2 \subset U$ such that $y^t\in U_2$ implies $y^t
+ q^t \in U_2$.
Take $U_3 \coloneqq U_1\cap U_2 \subset U$, for $y^t \in U_3$ and
$x^{t+1} = \phi(y^t + q^t)$, we get $y^t+q^t \in U_3 \subset U_1$, and
hence the following from \cref{eq:equiv}.
\begin{equation}
\norm{x^{t+1} - x^*} = \Theta\left(\norm{y^t+q^t - y^*}\right), \quad
\norm{x^t-x^*} = \Theta\left( \norm{y^t-y^*} \right).
\label{eq:propogate}
\end{equation}

On the other hand, consider the case in which the $t$th iteration is
a PG step.
As $\phi$ is $\mathcal{C}^2$ and $\phi(y^*) = x^*$, we can find $V_1$
such that $\phi(U_3) \supset V_1 \cap \M$.
From \cite[Lemma~3.2]{Wri12a}, there is $V_2$ such that $x^t\in V_2$
implies
\begin{equation}
\norm{x^{t+1} - x^*} = O(\norm{x^t-x^*}),
\label{eq:pgsize}
\end{equation}
so there is $V_3\subset V_2$ such that
$x^{t+1} \in V_1$ if $x^{t} \in V_3$.
Therefore, from \modify{\cref{thm:identify2} (applicable because we
have assumed \cref{eq:nod})}, there is $V_4$ such
that $x^t \in V_4$ implies $x^{t+1} \in \M$.
Take $V_5 \coloneqq V_4 \cap V_3$, then $x^t\in V_5$ implies $x^{t+1} \in V_1
\cap \M$, thus we can find $y^{t+1}\in U_3$ with $\phi(y^{t+1}) =
x^{t+1}$.

Now consider the first case in the statement.
If at the $t$th iteration we have $x^t\in V_5$ and have taken
\cref{eq:proxgrad1}, then $x^{t+1} \in V_1\cap \M$ with $x^{t+1} =
\phi(y^{t+1})$ for some $y^{t+1} \in
U_3$, so we can take a TSSN step at the $(t+1)$th iteration and
\begin{align}
\nonumber
\norm{x^{t+2} - x^*} &\stackrel{\cref{eq:propogate}}{=} \Theta \left(\norm{y^{t+1} + q^{t+1} -
y^*}\right) \stackrel{\cref{lemma:yt}}{=} O\left( \norm{y^{t+1} - y^*}^{1+\rho} \right)\\
&\stackrel{\cref{eq:propogate}}{=} O\left(
\norm{x^{t+1} - x^*}^{1+\rho} \right) \stackrel{\cref{eq:pgsize}}{=}
O\left( \norm{x^t - x^*}^{1+\rho} \right),
\label{eq:xconv1}
\end{align}
so there is $V_6 \subset V_5$ such that $x^t \in V_6$ implies $x^{t+2}
\in V_6$ as well and the superlinear convergence in \cref{eq:xconv1}
propagates to $t+2,t+4,\dotsc$.
We therefore see that $\norm{x^{t+2i+1} - x^*} =
O(\norm{x^{t+2i-1} - x^*}^{1+\rho})$ for $i \in \N$ as well,
so there is $V_7 \subset V_1$ such that $x^{t+2i-1} \in V_7 \cap \M$
implies $x^{t+2i+1} \in V_7 \cap \M$ as well.

Let $V = V_7 \cap V_6$, we see that $x^t \in V \cap \M$ implies
$x^{t+2} \in V \cap \M$ no matter we take PG or TSSN first, proving
the first equation in \cref{eq:quad1}.
The convergence of $\nabla F_\phi$ then follows from \cref{eq:gt} and
\cref{eq:gradbound}.
The superlinear convergence in the second case follows the same argument.
\qed
\end{proof}
Note that when $\rho=1$ in the first case, we obtain quadratic convergence.

The analysis in \cite{MCY19a} assumed directly \cref{eq:gradbound}
instead of \cref{eq:sharpness2}, together with a Lipschitzian Hessian
for $f$, under the setting of regularized optimization to get a
superlinear rate.
In the context of smooth optimization, our analysis is more general in
giving a wider range for superlinear convergence.
In particular, for \cref{eq:gradbound}, \cite{MCY19a} only allowed
$\tilde {\theta} = 1$, where $\tilde {\theta} \modify{\coloneqq} \hat \theta / (1 -
\hat \theta)$, whereas our result extends the range of superlinear
convergence to $\tilde {\theta} \geq 0.6$.

\begin{remark}
PCG returns the exact solution of \cref{eq:newton} in $d$ steps, where
$d$ is the dimension of $\M$, and each step involves only a bounded
number of basic linear algebra operations, so the running time of
\cref{alg:newton} is upper bounded.
Therefore, superlinear convergence of \cref{alg:vm2} in terms of
iterations, from \cref{thm:super}, implies that in terms of running time
as well.
This contrasts with existing PN approaches, as they all
require applying an iterative subproblem solver to \cref{eq:quadratic}
with increasing precision, which also takes increasing time per
iteration because \cref{eq:quadratic} has no closed-form solution.
\end{remark}

\ifdefined\arxiv
\else
\modify{
\subsection{Availability of Parameterization for the Manifold}
\label{subsec:chart}
The proposed \cref{alg:vm2} relies on the existence of the
parameterization $\phi$.
There are many widely-used regularizers in machine learning and signal
processing that are known to be partly smooth, and the corresponding
manifold can also be described by some simple parameterizations.
A summary of some of the most popular regularizers, including the
$\ell_1$-norm, $\ell_\infty$-norm, $\ell_{2,1}$-norm, $\ell_0$
pseudo-norm, nuclear norm (for matrix variables), and total variation,
and their associated manifolds can be found in
\cite[Chapter~5.2]{Lia16a}.

Except for those regularizers listed above, following the discussion
of \cite{MilM05a,Har06a}, several well-developed approaches can be
leveraged for the search of the parameterization in other partly
smooth regularizers.
The first one is the Riemannian optimization approach
(see, for example, \citep{AbsMS09a}), of which a projection from $\H$
to the tangent space of a manifold and a retraction map that projects
from the tangent space back to the manifold are the
workhorses\modifyy{.
Such} mappings for various manifolds have been constantly being developed.
The composite of the projection to the tangent space and the
retraction to the manifold can then be used as a reasonable
parameterization for our purpose.
The second approach is to rely on the $\mathcal{VU}$-decomposition
\citep{LemOS00a,MifS00a,MifS03a,MifS05a}.
It has been shown by \cite{MilM05a,Har06a} that the so-called fast
track in the $\mathcal{VU}$-decomposition for a convex and partly
smooth regularizer is equivalent to its corresponding active manifold,
and thus the developed descriptions for the fast track of various
functions can be utilized as another possibility for parameterization.
This corresponds to the tangential parameterization discussed in
\cite{MilM05a}.
These are also exactly the two parameterizations suggested by
\cite{LiaFP17a} in applying high-order acceleration steps for a family
of forward-backward splitting algorithms.
Another possible parameterization suggested by \cite{MilM05a} is the
projection parameterization that uses the Euclidean projection to map
a point in the tangent space back to the manifold.
}
\fi
\section{Numerical Results}
\label{sec:exp}
We conduct numerical experiments on $\ell_1$-regularized logistic
regression to support our theory, which is of the form in \cref{eq:f}
with
$\H = \R^d$ for some $d \in \N$,
\begin{equation}
	\Psi(x) = \lambda \norm{x}_1,\; f(x) = \sum_{i=1}^n \log\left(
	1 + \exp\left( -b_i \inprod{a_i}{x} \right) \right),
	\label{eq:l1lr}
\end{equation}
where $\lambda > 0$ decides the weight of the regularization and
$(a_i,b_i) \in \R^d \times \{-1,1\}$ for $i=1,\dotsc,n$ are the data
points.
Note that $\lambda \norm{x}_1$ is partly smooth at every $x \in \R^d$
relative to $\M_{x} \coloneqq \left\{ y \mid y_i = 0, \forall i \in
	\modifyy{J}_x \right\}$, where $\modifyy{J_x} \coloneqq \{i \mid x_i = 0\}$.
\modifyy{
Let $I$ be the identity matrix, and $J_x^C\coloneqq \{1,\dotsc,d\}
\setminus J_x$, then viewing from the definition of
$\M_x$ here, the parameterization we use at each iteration is
simply projecting $y_t \in \R^{|I_{x_t}|}$ back to $\R^d$.
Namely,}
\[
	\modifyy{\phi_t(y_t) \coloneqq I_{:,J_{x_t}^C}y_t.}
\]

We use public available real-world data sets listed in
\cref{tbl:data}.\footnote{Downloaded from
	\url{http://www.csie.ntu.edu.tw/~cjlin/libsvmtools/datasets/}.}
All experiments are conducted with $\lambda = 1$ in \cref{eq:l1lr}.
All methods are implemented in C++, and we set $\gamma = 10^{-4},
\beta = 0.5, T = 5$ throughout for \ipvm and \ipn.

\begin{table}[b]
	\caption{Data statistics.}
	\label{tbl:data}
\centering
\begin{tabular}{@{}lrr@{}}
	\toprule
Dataset & \#instances ($n$) & \#features ($d$)\\
	\midrule
\a9a & 32,561 & 123\\
\realsim & 72,309 & 20,958\\
\news & 19,996 & 1,355,191\\
\ijcnn1 & 49,990 & 22\\
\cov & 581,012 & 54\\
\rcvt & 677,399 & 47,226\\
\eps & 400,000 & 2,000\\
\webspam & 350,000 & 16,609,143\\
	\bottomrule
\end{tabular}
\end{table}
\subsection{Manifold Identification of Different Subproblem Solvers}
\label{subsec:subprobexp}
We start with examining the ability for manifold identification of
different subproblem solvers.
We run both \ipvm and the first stage of \ipn (by setting $S =
\infty$) and consider two settings for $H_t$.
The first is the L-BFGS approximation with a safeguard in
\cite{LeeLW19a}, and we set $m=10$ and $\delta = 10^{-10}$ in their
notation following their experimental setting.
The second is a PN approach in \cite{MCY19a} that uses
$H_t = \nabla^2 f\left( x^t \right) + c \norm{x^t - \prox_{\Psi}\left(
x^t - \nabla f\left( x^t \right) \right)}^\rho I$,
and we set $\rho = 0.5, c= 10^{-6}$ following their suggestion.
In both cases, we enlarge $H_t$ in \cref{alg:vm2} through $H_t
\leftarrow 2 H_t$.

We compare the following subproblem solvers.
\begin{itemize}
	\item SpaRSA \citep{WriNF09a}: a backtracking PG-type
		method with the initial step sizes estimated by the
		Barzilai-Borwein method.
	\item APG \citep{Nes13a}:
		See \cref{eq:ag}-\cref{eq:ag2}.
		We use a simple heuristic of restarting whenever the objective
		is not decreasing.
	\item Random-permutation cyclic coordinate descent (RPCD):
		Cyclic proximal coordinate descent with
		the order of coordinates reshuffled every epoch.
\end{itemize}

The results presented in \cref{tbl:result} show that all subproblem
solvers for \ipn can identify the active manifold, verifying
\cref{thm:identify2}.
Because the step sizes are mostly one in this experiment,
even solvers for \ipvm can identify the active manifold.
Among the solvers, RPCD is the most efficient and stable in
identifying the active manifold, so we stick to it in subsequent
experiments.

\begin{table}[b]
\centering
\caption{Outer iterations and time (seconds) for
different subproblem solvers to identify the active manifold.
\modify{For each \ipn variant, the fastest running time among all
subproblem solvers is boldfaced.}
}
\begin{tabular}{@{}llrrrrrr@{}}
	\toprule
Dataset& & \multicolumn{2}{c}{\a9a} & \multicolumn{2}{c}{\realsim}&
\multicolumn{2}{c}{\news} \\
\cmidrule{3-4}
\cmidrule{5-6}
\cmidrule{7-8}
& & Time & Iter.& Time & Iter.& Time & Iter.\\
\midrule
\multirow{3}{*}{\ipn-LBFGS} &SpaRSA & 0.7 & 229 & \textbf{5.0} & 185 & 10.3 & 136 \\
&APG & 1.3 & 408 & 11.8 & 408 & 16.7 & 230 \\
&RPCD & \textbf{0.6} & 190 & 5.7 & 165 & \textbf{9.6} & 131 \\
\midrule
\multirow{3}{*}{\ipvm-LBFGS} &SpaRSA & 0.9 & 238 & \textbf{5.9} & 225
& \textbf{6.1} & 129 \\
&APG & 1.8 & 617 & 12.5 & 418 & 11.9 & 237 \\
&RPCD & \textbf{0.8} & 238 & 6.1 & 174 & 7.8 & 149 \\
\midrule
\multirow{3}{*}{\ipn-Newton} &SpaRSA & \textbf{3.3} & 16 & 13.9 & 11 & 6.6 & 19 \\
&APG & 6.3 & 45 & 99.6 & 76 & 8.7 & 19 \\
&RPCD & 6.6 & 371 & \textbf{1.6} & 11 & \textbf{0.5} & 7 \\
\midrule
\multirow{3}{*}{\ipvm-Newton} &SpaRSA & \textbf{3.4} & 16 & 14.2 & 11 & 6.4 & 19 \\
&APG & 6.3 & 45 & 88.5 & 76 & 8.8 & 19 \\
&RPCD & 6.8 & 371 & \textbf{1.6} & 11 & \textbf{0.6} & 7 \\
\bottomrule
\end{tabular}
\label{tbl:result}
\end{table}
\subsection{Comparing \ipn with Existing Algorithms}

We proceed to compare \ipn with the following state of the art for
\cref{eq:f} using the relative objective value: $(F(x) - F^*) / F^*$.
\begin{itemize}
	\item LHAC \citep{SchT16a}: an inexact proximal L-BFGS method with
		RPCD for \cref{eq:quadratic} and a trust-region-like
		approach.
		Identical to our L-BFGS variant,
		we set $m=10$ for constructing $H_t$ in this experiment.
	\item NewGLMNET \citep{YuaHL12a}: a line-search PN with an RPCD
		subproblem solver.
\end{itemize}
IRPN \citep{MCY19a} is another PN method that performs slightly faster
than NewGLMNET, but their algorithmic frameworks are similar and the
experiment in \cite{MCY19a} showed that the running time of NewGLMNET
is competitive.
We thus use NewGLMNET as the representative because its code is
open-sourced.

For \ipn, we set $S=10$ and use both PN and L-BFGS variants with
RPCD in the first stage and \cref{alg:newton} with $\rho = 0.5, c =
10^{-6}$ in the second.
For PCG, we use the diagonals of $H_t$ as the preconditioner.
We use a heuristic to let PCG start with an iteration bound $T_0 = 5$,
double it whenever $\alpha_t = 1$ until reaching the dimension of
$\M$, and reset it to $5$ when $\alpha_t < 1$.
\modify{For the value of $S$, although tuning it properly might lead to even
better performance, we observe that the current setting already
suffices to demonstrate the improved performance of the proposed
algorithm.}

\modify{Results in \cref{fig:compare} show} the superlinear convergence
in running time of \ipn, while LHAC and NewGLMNET only exhibit linear
convergence.
We observe that for data with $n \gg d$, including \a9a, \ijcnn1,
\cov, and \eps, L-BFGS approaches are faster because $H_t p$ can be
evaluated cheaply (LHAC failed on \cov due to implementation issues),
and PN approaches are faster otherwise, so no algorithm is always
superior.
Nonetheless, for the same type of $H_t$, \ipn-LBFGS and \ipn-Newton
respectively improve state-of-the-art algorithms LHAC and NewGLMNET
greatly because of the fast local convergence, especially when the
base method converges slowly.

\begin{figure}[tb]
\centering
\begin{tabular}{ccc}
\begin{subfigure}[b]{0.29\textwidth}
	\includegraphics[width=\linewidth]{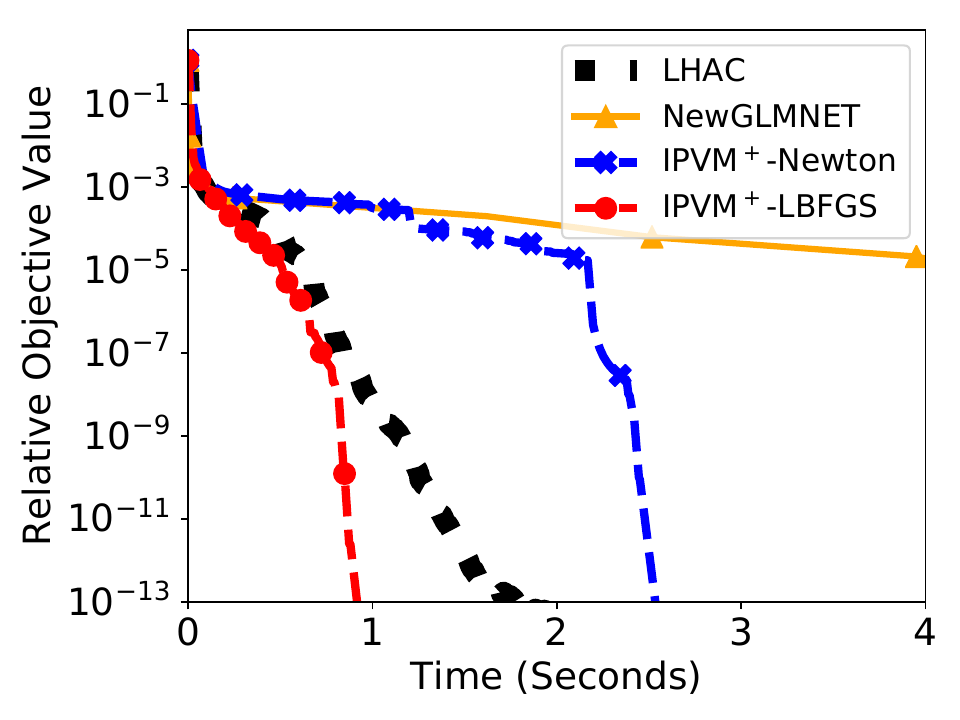}
	\caption{\a9a}
\end{subfigure}&
\begin{subfigure}[b]{0.29\textwidth}
	\includegraphics[width=\linewidth]{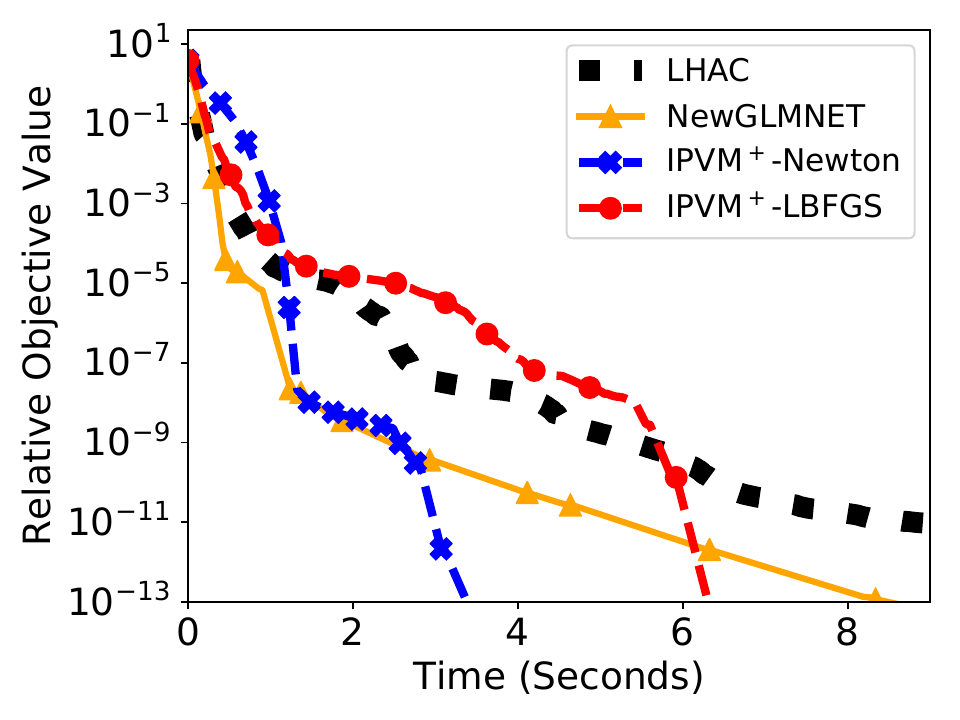}
	\caption{\realsim}
\end{subfigure}&
\begin{subfigure}[b]{0.29\textwidth}
	\includegraphics[width=\linewidth]{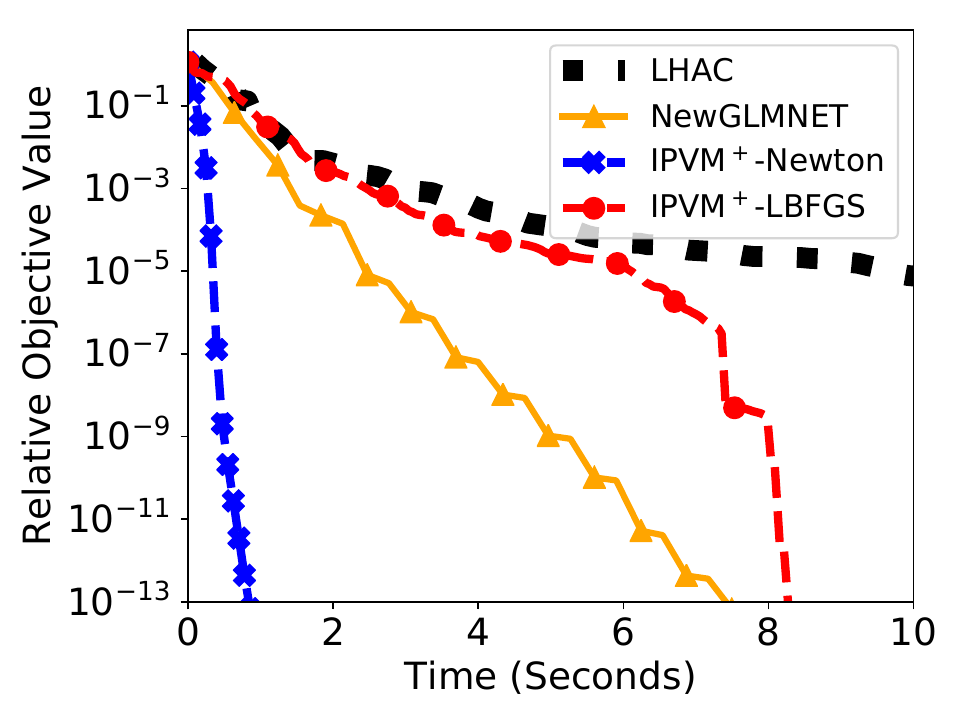}
	\caption{\news}
\end{subfigure}\\
\begin{subfigure}[b]{0.29\textwidth}
	\includegraphics[width=\linewidth]{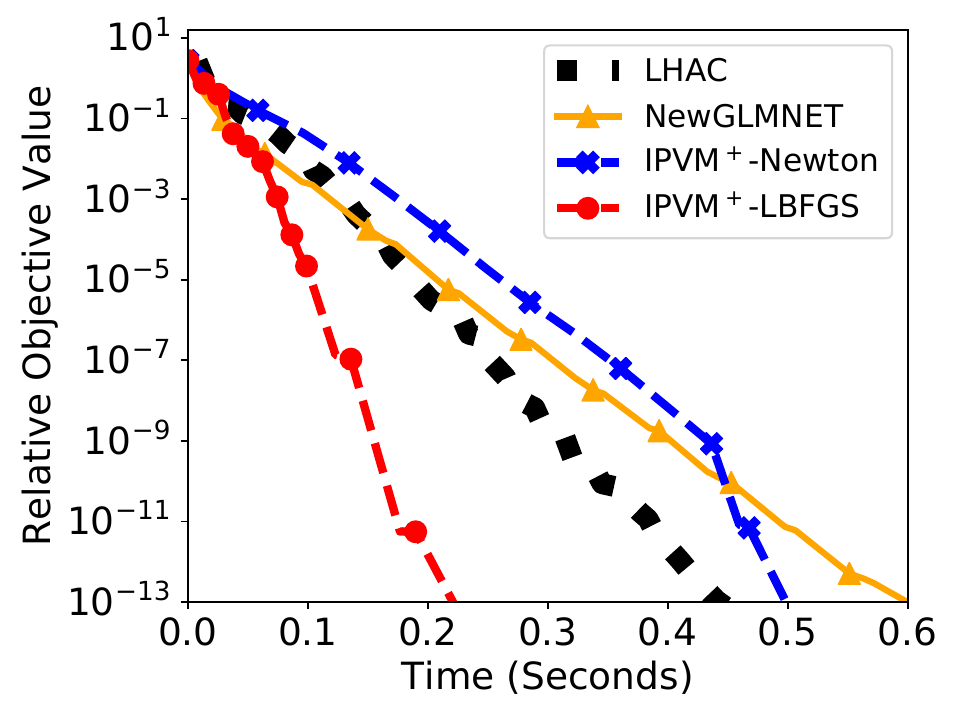}
	\caption{\ijcnn1}
\end{subfigure}&
\begin{subfigure}[b]{0.29\textwidth}
	\includegraphics[width=\linewidth]{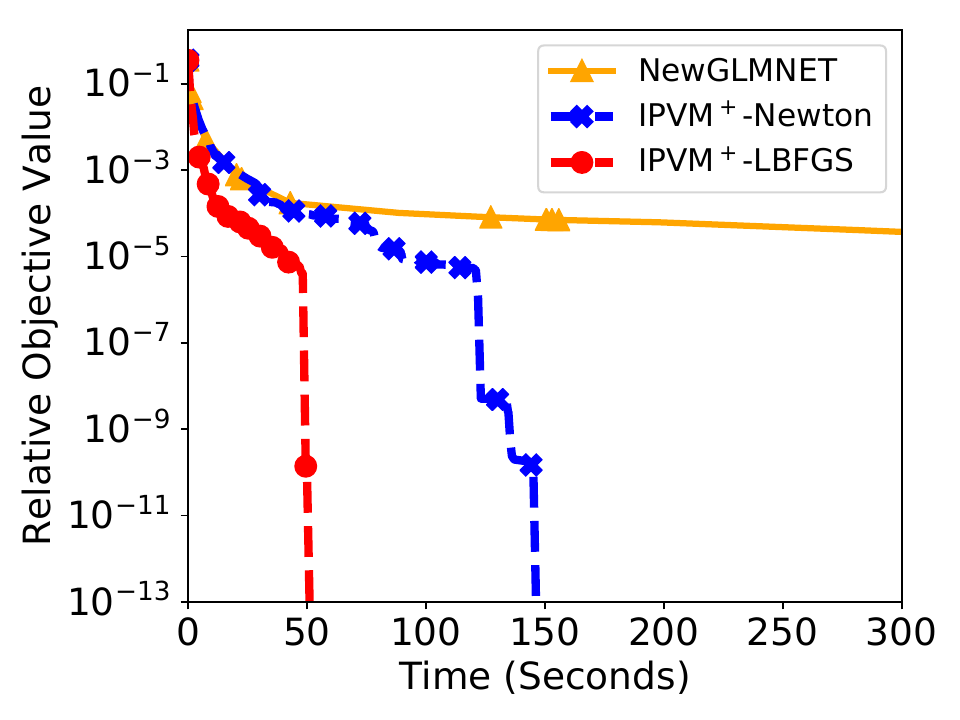}
	\caption{\cov}
\end{subfigure}&
\begin{subfigure}[b]{0.29\textwidth}
	\includegraphics[width=\linewidth]{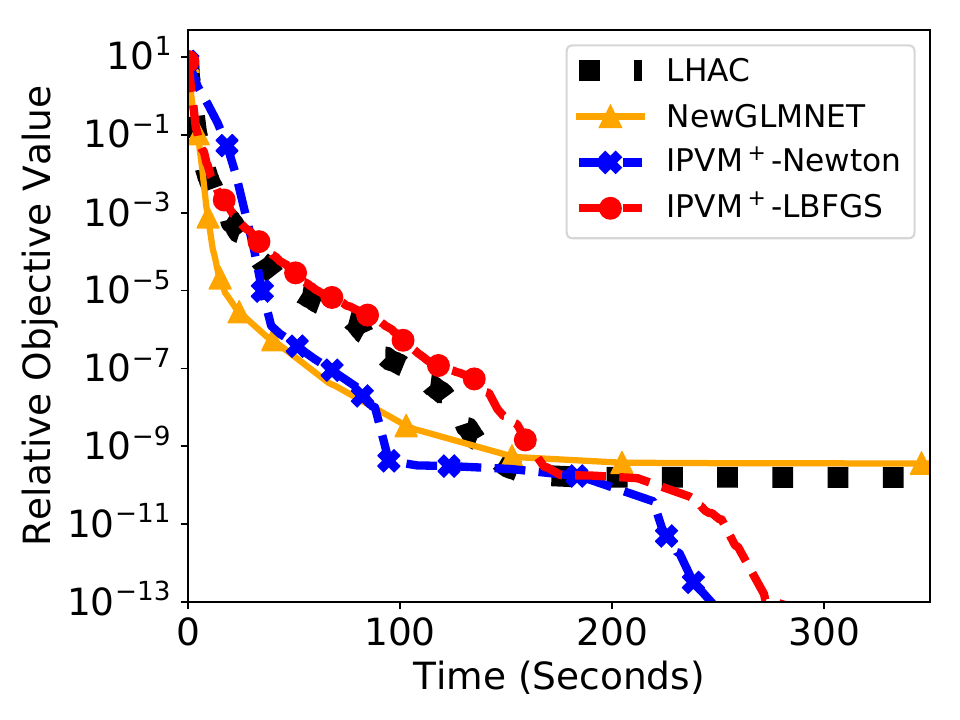}
	\caption{\rcvt}
\end{subfigure}\\
\begin{subfigure}[b]{0.29\textwidth}
	\includegraphics[width=\linewidth]{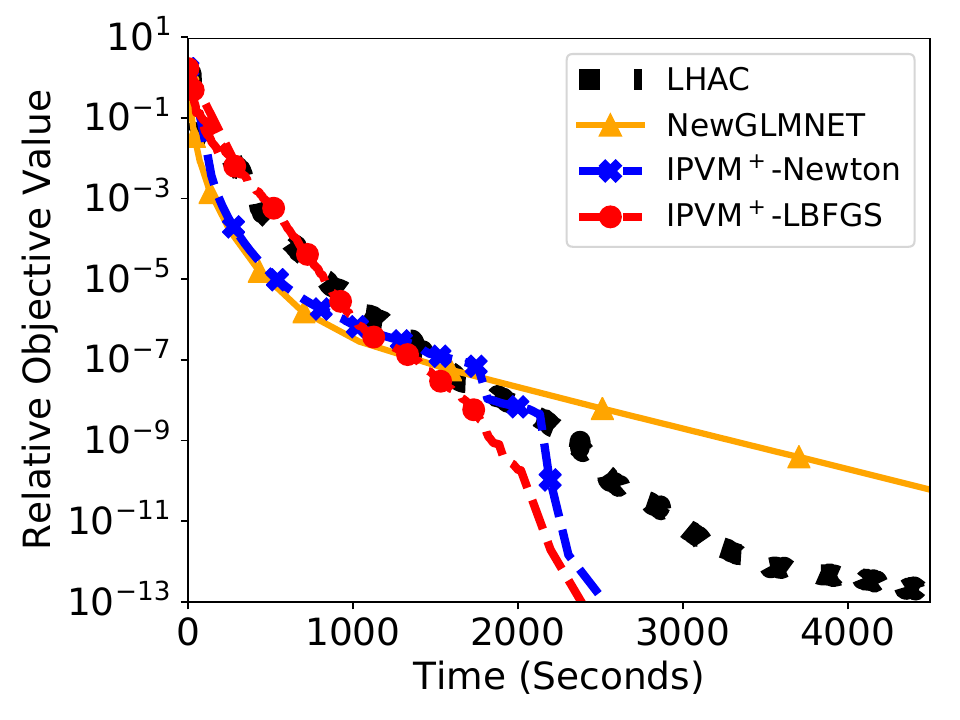}
	\caption{\eps}
\end{subfigure}&
\begin{subfigure}[b]{0.29\textwidth}
	\includegraphics[width=\linewidth]{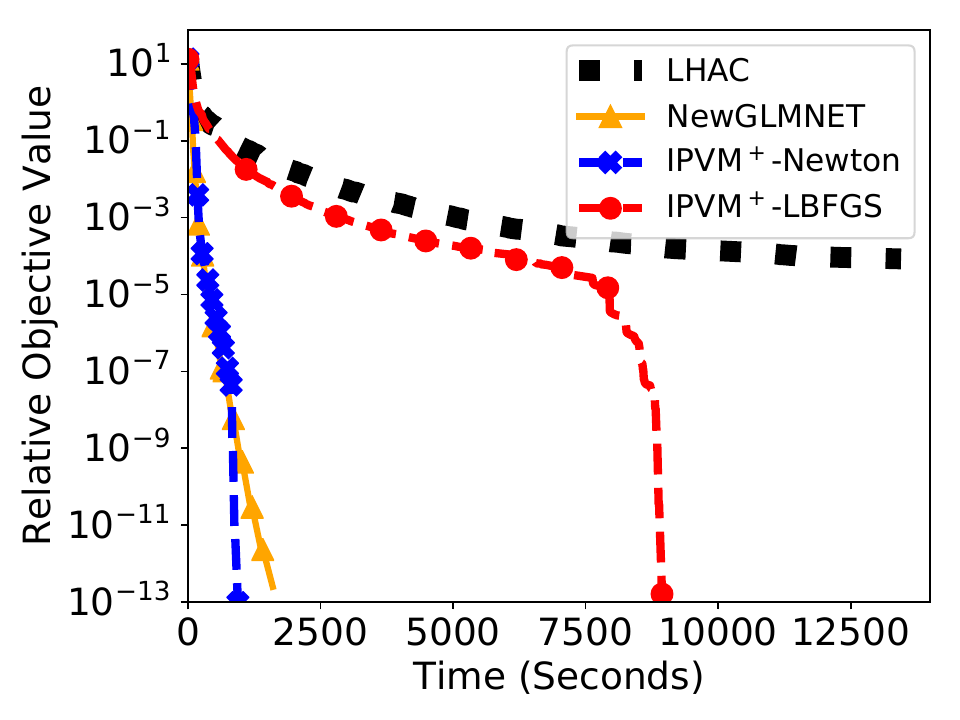}
	\caption{\webspam}
\end{subfigure}&
\begin{subfigure}[b]{0.29\textwidth}
	\includegraphics[width=\linewidth]{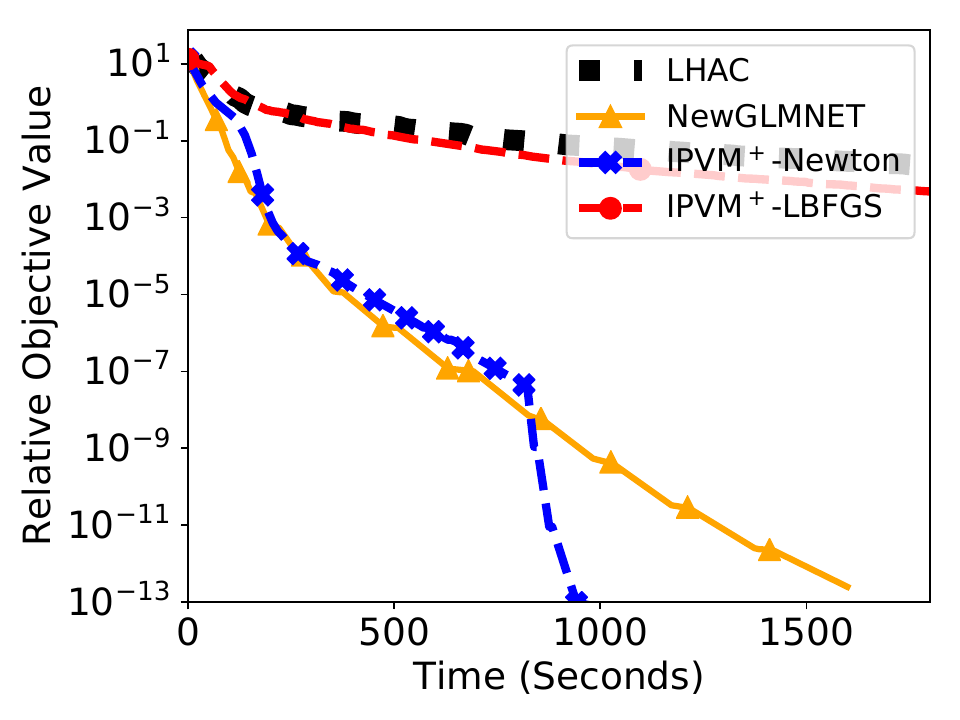}
	\caption{\webspam (finer scale)}
\end{subfigure}
\end{tabular}
	\caption{Comparison of Different Algorithms. For \cov, LHAC is not shown
	because it failed to converge.}
	\label{fig:compare}
\end{figure}

\section{Conclusions}
\label{sec:con}
In this paper, we showed that for regularized problems with a
partly smooth regularizer, inexact successive quadratic approximation
is essentially able to identify the active manifold because a mild
sufficient condition is satisfied by most of commonly-used subproblem
solvers.
An efficient algorithm \ipn utilizing this property is proposed to
attain superlinear convergence on a wide class of degenerate problems
in running time, greatly improving upon state of the art for
regularized problems that only exhibit superlinear convergence in
outer iterations.
\ifdefined\arxiv
Numerical evidence illustrated that \ipn significantly improves the running time of
state of the art for regularized optimization.
\else
Numerical evidence illustrated that \modify{\ipn improves} the running time of
state of the art for regularized optimization.
\fi

\section*{Acknowledgements}
I thank Steve Wright for pointing to me the direction of manifold
identification.  It is my great pleasure to have worked with him during my
PhD study, and this work is extended from my research in that period.  I
also thank Michael Ferris for the inspiration of the sharpness
condition.
\bibliographystyle{spbasic}
\ifdefined\arxiv
\bibliography{header-arxiv,vmmanifold}
\else
\bibliography{header-submit,vmmanifold}
\fi

\appendix
\section{Proof of \cref{thm:rate}}
\label{app:proof1}
\begin{proof}

For any $t \geq 0$, \cite[Lemma~5]{LeeW18a} gives
\begin{align}
Q^*_t \leq  \min_{\lambda \in [0,1]}\,\lambda(F^* -
F\left(x^t\right)) + \frac{\lambda^2 }{2} \|H_t\| \dist(x^t,\Omega)^2.
\label{eq:Q0}
\end{align}
Combining \cref{eq:Q0} and \cref{eq:sharpness} then leads to
\begin{align}
Q^*_t \leq -\lambda \delta_t + \frac{\lambda^2
	\|H_t\|\delta_t^{2\theta}}{2\zeta^2},\;\;
\forall\lambda \in [0,1].
\label{eq:Q}
\end{align}
Through \cref{eq:armijo}, \cref{eq:subprob}, and
\cref{eq:multiplicative},
we can further deduce from \cref{eq:Q} that
\begin{align}
&~\delta_{t+1} - \delta_t \leq \alpha_t \gamma \left( 1 - \eta
	\right) Q_t^*
	\leq \alpha_t \gamma (1 - \eta) \left( - \lambda \delta_t +
	\frac{\lambda^2 M \delta_t^{2\theta}}{2 \zeta^2} \right),
	\quad \forall \lambda \in [0,1].
\label{eq:improve}
\end{align}

When $\theta = 1/2$, clearly \cref{eq:Q}-\cref{eq:improve} imply
\begin{equation}
	\delta_{t+1} \leq \min_{\lambda \in [0,1]}\,\delta_t \left( 1 - \alpha_t \gamma \left( 1 - \eta
	\right) \left(\lambda - \frac{\lambda^2 \|H_t\|}{2 \zeta^2} \right)
	\right).
	\label{eq:linearrate}
\end{equation}
Setting $\lambda = \min\left\{ 1, \zeta^2\|H_t\|^{-1} \right\}$
in \cref{eq:linearrate} then proves \cref{eq:qlinear}.

For $\theta \neq 1/2$, we will use \cref{eq:improve} with
\begin{equation}
	\lambda = \min\left\{ 1, \zeta^2\delta_t ^{1 -
	2\theta}M^{-1} \right\}.
	\label{eq:lambda}
\end{equation}
The case in which $\lambda = 1$ in \cref{eq:lambda} suggests that
	$M\delta_t^{2\theta - 1}\zeta^{-2} \leq 1$
and corresponds to the following linear decrease.
\begin{align*}
\delta_{t+1} - \delta_t
\leq
\alpha_t \gamma (1 - \eta) \delta_t \left( -1 + \frac{M
	\delta_t^{2 \theta - 1}}{2 \zeta^2}\right)
\leq -\delta_t \frac{\alpha_t \gamma (1 - \eta) }{2},
\end{align*}
which is exactly \cref{eq:weaksharp}.
When $\theta > 1/2$, $2 \theta - 1 > 0$ so $M\delta_t^{2\theta -
1}\zeta^{-2} \leq 1$ if and only if \cref{eq:bound} fails,
and when $\theta < 1/2$, it is the opposite case because $2 \theta - 1
< 0$.

When $\lambda = \zeta^{2}\delta_t ^{1 -2
\theta} M^{-1}$ in \cref{eq:lambda}, \cref{eq:improve} leads to
\begin{equation}
\delta_{t+1}  \leq \delta_t -
\frac{\alpha_t \left( 1 - \eta \right)\gamma\zeta^2}{2M}\delta_t^{2 - 2 \theta}.
\label{eq:fixed2}
\end{equation}
For $\theta > 1/2$, this happens when \cref{eq:bound} holds.
The rate \cref{eq:earlylinear} is then obtained from
\cref{eq:fixed2} by $-\delta_t^{1 - 2\theta} \leq - \xi^{1 - 2\theta}$
for all $t$, implied by $x^0 \in \levbet$ and the monotonicity of
$\{\delta_t\}$.
For $\theta < 1/2$, since $1 - 2\theta > 0$, $\lambda < 1$
if $\delta_t$ is small enough to fail \cref{eq:bound}.
We have $2 - 2\theta > 1$,
so \cref{eq:fixed2} and Lemma~6 in \cite[Chapter~2.2]{Pol87a}
imply \cref{eq:sublinear}.
\qed
\end{proof}

\section{Proof of \cref{thm:iterate}}
\label{app:proof}
From \cref{thm:rate}, we have gotten a faster convergence speed of the
objective.
We then need to relate it to the decrease of $\hat Q_t$ and $Q_t^*$
before proving \cref{thm:iterate}.

\begin{lemma}
Consider \cref{alg:vm}.
If $\Omega\neq \emptyset$, $f$ is $L$-smooth for $L > 0$, $\Psi
\in \Gamma_0$,
and there are $m > 0$ and $\eta \in [0,1)$ such that $H_t \succeq m$
	and $p^t$ satisfies \cref{eq:subprob} with
	\cref{eq:multiplicative} for all $t$,
then
\begin{gather}
\hat Q_t - Q_t^* \leq
\frac{\eta\delta_t}{\gamma \bar \alpha (1 -
\eta)},\quad
-Q_t^* \leq \frac{\delta_t}{\gamma \bar \alpha (1 - \eta)},\quad
\forall t \geq 0,
\label{eq:pstar}
\end{gather}
where $\bar \alpha >0$ is the constant in \cref{thm:rate}.
\label{lemma:p}
\end{lemma}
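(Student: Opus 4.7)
The plan is to chain together three inequalities: (i) a telescoping lower bound on the per-iteration objective decrease that comes from the Armijo condition, (ii) the step-size lower bound $\alpha_t \geq \bar\alpha$ inherited from \cref{thm:rate}, and (iii) the subproblem inexactness bound \cref{eq:subprob} with \cref{eq:multiplicative}. Together, these will let us control both $-\hat Q_t$ and $-Q_t^*$ in terms of the suboptimality $\delta_t$.

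First I would apply \cref{eq:armijo}, which gives
\[
F(x^{t+1}) \leq F(x^t) + \gamma \alpha_t \hat Q_t.
\]
Since $F(x^{t+1}) \geq F^*$, rearrangement yields $-\gamma \alpha_t \hat Q_t \leq F(x^t) - F^* = \delta_t$. I would then invoke \cref{thm:rate} (valid because the hypotheses, in particular convexity of $f$ is not needed for the step-size bound, but the bound $\alpha_t \geq \bar\alpha$ follows from \cref{assum:Hbound} via \cite[Corollary~1]{LeeW18a} which was noted earlier in the text) to replace $\alpha_t$ by the lower bound $\bar\alpha$, obtaining
\[
-\hat Q_t \leq \frac{\delta_t}{\gamma \bar\alpha}.
\]
Note that this step implicitly uses $\hat Q_t \leq 0$, which follows from $Q_t(0)=0$ together with $\hat Q_t - Q_t^* \leq -\eta Q_t^*$, i.e.\ $\hat Q_t \leq (1-\eta) Q_t^* \leq 0$.

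Next, from \cref{eq:subprob} with \cref{eq:multiplicative} we have $\hat Q_t - Q_t^* \leq -\eta Q_t^*$, equivalently $\hat Q_t \leq (1-\eta) Q_t^*$. Since $Q_t^* \leq 0$ and $1-\eta > 0$, this rearranges to $-Q_t^* \leq -\hat Q_t / (1-\eta)$. Combining with the previous display gives
\[
-Q_t^* \leq \frac{\delta_t}{\gamma\bar\alpha(1-\eta)},
\]
which is the second inequality in \cref{eq:pstar}. The first inequality then follows immediately by inserting this bound on $-Q_t^*$ into $\hat Q_t - Q_t^* \leq -\eta Q_t^*$.

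The proof is essentially routine once the step-size lower bound $\bar\alpha$ from \cref{thm:rate} is available; the only mild subtlety is ensuring $\hat Q_t \leq 0$ so that the Armijo inequality can be rearranged without sign issues, but this is handled automatically by \cref{eq:multiplicative} because $Q_t(0)=0$ forces $Q_t^* \leq 0$ and hence $\hat Q_t \leq (1-\eta)Q_t^* \leq 0$.
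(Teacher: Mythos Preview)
Your proof is correct and follows essentially the same approach as the paper: both use the Armijo condition \cref{eq:armijo} together with $\alpha_t \geq \bar\alpha$ and the relation $\hat Q_t \leq (1-\eta)Q_t^*$ (from \cref{eq:subprob}, \cref{eq:multiplicative}, and $Q_t(0)=0$) to obtain the bound on $-Q_t^*$. The only cosmetic difference is that you derive the first inequality in \cref{eq:pstar} by plugging the second into $\hat Q_t - Q_t^* \leq -\eta Q_t^*$, whereas the paper instead rewrites $-\hat Q_t \geq \tfrac{1-\eta}{\eta}(\hat Q_t - Q_t^*)$ and reapplies Armijo; your route is slightly more direct.
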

\begin{proof}
From \cref{eq:subprob}, \cref{eq:multiplicative}, and $Q_t(0)
= 0$, we know that
\begin{align}
\label{eq:Q1}
- \hat Q_t \geq - (1 - \eta) Q_t^*.
\end{align}
By \cref{eq:Q1}, \cref{eq:armijo}, $\delta_t \geq 0$, and
$\alpha_t \geq \bar \alpha$ from \cref{thm:rate}, the second
inequality of \cref{eq:pstar} is proven by
\begin{equation}
	\label{eq:ptar0}
\delta_t \geq \delta_t - \delta_{t+1} \geq - \gamma \alpha_t \left( 1 - \eta \right)
Q_t^*
\geq - \gamma \bar \alpha \left( 1 - \eta \right) Q_t^*.
\end{equation}
	
For the first inequality,
the case of $\eta = 0$ holds trivially.
If $\eta \in (0,1)$,
\cref{eq:Q1} implies
\begin{align}
\label{eq:Q2}
-\eta \hat Q_t \geq
(1 - \eta) \left( \hat Q_t - Q_t^* \right)
\quad \Rightarrow \quad
-\hat Q_t \geq
\frac{1 - \eta}{\eta} \left( \hat Q_t - Q_t^* \right).
\end{align}
Following the same logic for obtaining \cref{eq:ptar0},
we get from \cref{eq:armijo} and \cref{eq:Q2} that
	\[\eta \delta_t \geq \gamma \alpha_t (1 - \eta) \left( \hat Q_t
- Q_t^* \right) \geq
	\gamma \bar \alpha (1 - \eta) \left( \hat Q_t - Q_t^*
\right).
	\tag*{\qed}
\]
\end{proof}

\begin{proof}[Proof of \cref{thm:iterate}]
	\cref{lemma:p} together with \cref{eq:qg} implies that
\begin{equation}
	\norm{p^t - p^{t*}} = O\left(\delta_t^{\frac12}\right),\quad
	\norm{p^{t*}} =\norm{0 - p^{t*}} = O\left(\delta_t^{\frac12}\right).
	\label{eq:pnorm}
\end{equation}
Through the triangle inequality, we have from
\cref{eq:pnorm} and $\alpha_t \leq 1$ that
\begin{equation}
	\norm{x^{t+1} - x^t} = \norm{\alpha_t p^t} = \alpha_t \norm{p^t}
	\leq \norm{p^t} \leq \norm{p^{t*}} + \norm{p^t - p^{t*}}
	= O\left( \delta_t^{\frac{1}{2}} \right).
	\label{eq:normdiff}
\end{equation}
Condition \cref{eq:sharpness} implies that for any $x^0$, $\dist(x,
\Omega)$ is upper-bounded for all $x \in \levv{F(x^0)}$, as argued in
\cite{Bol17a}.
Thus, \cite[Theorem~1]{LeeW18a} implies that $\levbet$ is reached in
finite iterations.
As the asymptotic convergence is concerned, we assume
$x^0 \in \levbet$ without loss of generality.

We now apply \cref{thm:rate} and separately consider $\theta
\in [1/2,1]$ and $\theta \in (1/4,1/2)$.
When $\theta \in [1/2,1]$, $\{\delta_t\}$ converges $Q$-linearly to
$0$,
so there exist $t_0 \geq 0$ and $\tau \in [0,1)$ such that
\[
	\delta_{t+1} \leq \tau \delta_t, \quad \forall t \geq t_0.
\]
Thus for any $t_1 \geq t_2 \geq t_0$, we have from the triangle
inequality and \cref{eq:normdiff} that
\begin{align*}
&~\norm{x^{t_1} - x^{t_2}} \leq \sum_{t=t_2}^{t_1-1} \norm{x^{t+1} -
x^t} \\
= &~O\left( \sum_{t=t_2}^{t_1-1} \sqrt{\tau^{t - t_0}\delta_{t_0}} \right) =
O\left( \sqrt{\delta_{t_0}}\sqrt{\tau^{t_2 -
t_0}} \left(1 - \sqrt{\tau^{t_1 -
t_2}}\right)(1 - \sqrt{\tau})^{-1}\right),
\end{align*}
whose right-hand side converges to $0$ as $t_2$ approaches infinity.
Therefore, $\{x^t\}$ is a Cauchy sequence and converges to a point
$x^*$ as $\H$ is a complete space.

Similarly, for $\theta \in (1/4,1/2)$,
\cref{thm:rate} implies that there is $t_0 \geq 0$ such that
\[
	\sqrt{\delta_t} = O\left( t^{-\frac{1}{2(1 - 2\theta)}} \right)
	= O\left(t^{-\tau}\right),\quad \forall t \geq t_0
\]
with $\tau = 1 / (2(1 - 2\theta)) \in (1,\infty)$.
Therefore,
\[
	\sum_{t=t_2}^{\infty} t^{-\tau} = O\left( t_2^{-\tau + 1}
	\right),\quad \forall t_2 \geq t_0,
\]
and the right-hand side converges to $0$.
We thus have from above that
$\sum_{t=t_2}^{t_1-1}\sqrt{\delta_t}$ converges to $0$ as $t_2$
approaches infinity,
and \cref{eq:normdiff} shows that so does $\sum_{t=t_2}^{t_1-1}
\norm{x^{t+1}- x^t}$.
This shows that $\{x^t\}$ is a Cauchy sequence and hence
converges to a point $x^* \in \H$.

It remains to show that $x^* \in \Omega$.
Since $f$ is continuous and $\Psi$ is lower semicontinuous, $F$ is
also lower semicontinuous.
Therefore, we have that
\begin{equation}
	\lim\inf_{t\rightarrow \infty} F\left( x^t \right) \geq F\left( x^* \right) \geq F^*.
	\label{eq:liminf}
\end{equation}
However, \cite[Theorem~1]{LeeW18a} implies $\lim_{t \rightarrow
\infty} F(x^t) = F^*$, which together with \cref{eq:liminf} shows that
$F(x^*) = F^*$, and hence $x^* \in \Omega$.
\qed
\end{proof}

\modify{
\section{Proof of \cref{thm:rate2}}
\label{app:proof2}
\begin{proof}
We first note that \cref{eq:proxgrad1} can be seen as solving
\cref{eq:quadratic} exactly with $H_t = \hat L I$, and it is widely-known
that for $\hat L \geq L$, \cref{eq:armijo} is always satisfied with
$\alpha_t = 1$.
Therefore, under \cref{assum:Hbound}, we have
\begin{equation}
	\tilde M \succeq H_{k_t} \succeq \tilde m, \quad \forall t \geq 0,
	\label{eq:Hbound}
\end{equation}
and \cref{eq:subprob} with \cref{eq:multiplicative}  holds for all
$k_t$.
For the MO steps, we note that it does not increase the objective
value, so for any computation involving the objective change, we can
safely skip them.
That $k_t \leq 2t$ for any $t$ is clear, since we do not conduct two
consecutive MO steps.

\begin{enumerate}
\item The part of $G_t = 0$ if and only if $0 \in \partial F(x^t)$ is
well-known. See, for example, \cite[Lemma~7]{LeeW18a}.
Thus, the claim of stationarity of the limit points follow directly from
the claim that $G_{k_t} \rightarrow 0$.

Now let us prove that $G_{k_t} \rightarrow 0$ and the upper bound for
$\min_{0 \leq k \leq t}  \norm{G_{k_t}}^2$.
For the $k_i$th iteration, we get from \cref{eq:armijo}, and that
\cref{eq:multiplicative} indicates that $\hat Q_{k_i} < 0$, that
\begin{equation}
	-\gamma \hat Q_{k_i} \leq
	F\left( x^{k_i} \right) - F\left( x^{k_i+1}\right).
	\label{eq:Qdecrease}
\end{equation}
By applying \cref{eq:multiplicative} and \cref{eq:subprob} to
\cref{eq:Qdecrease}, we further get
\[
	- \gamma (1 - \eta) Q_{k_i}^* \leq 
	F\left( x^{k_i} \right) - F\left( x^{k_i+1}\right).
\]
By summing \cref{eq:Qdecrease} over $i=0,\dotsc,t$, we get
\begin{equation}
\label{eq:Qsum2}
- \gamma (1 - \eta) \sum_{i=0}^t Q_{k_i}^* \leq 
\sum_{i=0}^t F\left( x^{k_i} \right) - F\left( x^{k_i+1}\right)
\leq
F\left( x^{k_0} \right) - F\left( x^{k_t+1} \right) \leq F(x^0) - F^*,
\end{equation}
where the penultimate inequality is from that $F(x^t)$ is
nonincreasing.
Following \cite[Lemma~7]{LeeW18a} and \cite[Lemma~3]{TseY09a}, we then
have from \cref{eq:Hbound} that
\begin{equation}
	\frac{F(x^0) - F^*}{\gamma (1 - \eta)} \geq \sum_{i=0}^t -Q_{k_i}^* \geq
	\sum_{i=0}^t \frac{2 \tilde m}{\tilde M^2 \left( 1 + \tilde m^{-1} +
	\sqrt{1 - 2 \tilde M^{-1} + \tilde m^{-2}} \right)^2}
	\norm{G_{k_i}}^2.
	\label{eq:penultimate}
\end{equation}
Since the above is true for any $t \geq 0$, it shows that
$\{\norm{G_{k_t}}^2\}$ is summable, so $\norm{G_{k_t}}^2$ converges to
$0$, which implies that $G_{k_t} \rightarrow 0$.
The rate is then obtained from \cref{eq:penultimate} by noting that
\[
	\sum_{i=0}^t \norm{G_{k_i}}^2 \geq \left( t+1 \right) \min_{0 \leq
	i \leq t} \norm{G_{k_i}}^2.
\]

\item This part follows directly from the proof of
	\cite[Theorem~1]{LeeW18a} by replacing $t$ by $t_k$, noting that
	MO steps do not increase the objective value so $\delta_{k_{t+1}}
	\leq \delta_{k_t+1}$ for all $t$, and using the bound
	\[
		\inprod{x^t - P_{\Omega}(x^t)}{H_t (x^t - P_{\Omega}(x^t))}
		\leq \tilde M \norm{x^t - P_{\Omega}(x^t}^2 \leq M R_0^2,\quad
		\forall t \geq 0
	\]
	from \cref{eq:R0}.

\item This part follows from the proof of \cref{thm:rate} again by
	using \cref{eq:Hbound} and that
	$\delta_{k_{t+1}} \leq \delta_{k_t+1}$ for all $t$.
\end{enumerate}

\end{proof}
}

\end{document}